\algnewcommand{\IIf}[1]{\State\algorithmicif\ #1\ \algorithmicthen}
\algnewcommand{\EndIIf}{\unskip\ \algorithmicend\ \algorithmicif}
\newtheorem{thm}{Theorem}[section]
\newtheorem{lem}[thm]{Lemma}
\newtheorem{cor}[thm]{Corollary}
\newtheorem{prop}[thm]{Proposition}
\newtheorem{conj}[thm]{Conjecture}
\newtheorem{rem}[thm]{Remark}
\theoremstyle{definition} 
\newtheorem{exa}[thm]{Example}
\newcommand{\Z}{\mathbb{Z}}
\def\Sym{\mathrm{Sym}}
\begin{document}

\title{On Sequences in Cyclic Groups with \\ Distinct Partial Sums}

\author[1]{Simone Costa} 
\author[2]{Stefano Della Fiore} 
\author[3]{M.~A.~Ollis\footnote{Corresponding author:  \texttt{matt\_ollis@emerson.edu} }}
\author[4]{Sarah Z.~Rovner-Frydman}

\affil[1]{DICATAM, Sez.~Matematica, Universit\`a degli Studi di Brescia, Via Branze~43, I~25123 Brescia, Italy}
\affil[2]{DII, Universit\`a degli Studi di Brescia, Via Branze~38, I~25123 Brescia, Italy}
\affil[3]{Marlboro Institute for Liberal Arts and Interdisciplinary Studies, Emerson College, 120~Boylston Street, Boston, MA 02116, USA} 
\affil[4]{Marlboro College, P.~O.~Box~A, Marlboro, VT 05344, USA}

\date{}

\maketitle

\begin{abstract}
A subset of an abelian group is {\em sequenceable} if there is an ordering~$(x_1, \ldots, x_k)$ of its elements such that the partial sums~$(y_0, y_1, \ldots, y_k)$, given by $y_0 = 0$ and $y_i = \sum_{j=1}^i x_i$ for $1 \leq i \leq k$, are distinct, with the possible exception that we may have~$y_k = y_0 = 0$.   We demonstrate the sequenceability of subsets of size~$k$ of $\Z_n \setminus \{ 0 \}$ when~$n = mt$ in many cases, including when~$m$ is either prime or has all prime factors larger than~$k! /2$ for~$k \leq 11$ and $t \leq 5$ and for~$k=12$ and~$t \leq 4$.  We obtain similar, but partial, results for~$13 \leq k \leq 15$.   This represents progress on a variety of questions and conjectures in the literature concerning the sequenceability of subsets of abelian groups, which we combine and summarize into the conjecture that if a subset of an abelian group does not  contain~0 then it is sequenceable.
\end{abstract}

\section{Introduction}\label{sec:intro}

Given a subset of an abelian group, is it possible to order the elements of the subset in such a way that the partial sums of the ordering are distinct?  This type of problem goes back at least fifty years and there are several conjectures on the topic, described below.  The successful resolution, or partial resolution,  of these conjectures, has implications in the study of graph decompositions and embeddings and in the construction of Heffter arrays and other combinatorial designs.

We introduce some definitions and notation to make the question precise.  Let~$G$ be an abelian group and let $S$ be a subset of $G \setminus \{ 0 \}$ of size~$k$.  

Let ${\bm x} = (x_1, x_2, \ldots, x_k)$ be an ordering of the elements of~$S$ and define its partial sums ${\bm y} = (y_0, y_1, \ldots, y_k)$ by $y_0=0$ and $y_i = x_1 + \cdots + x_i$ for $i>0$.   Denote the sum of the elements of~$S$ by $\Sigma S$.  As $G$ is abelian, for any ordering of the elements of~$S$ the final partial sum $y_k$  is equal to $\Sigma S$.

If the elements of ${\bm y}$ are distinct, then ${\bm x}$ is a {\em sequencing} of~$S$.  If the elements of ${\bm y}$ are distinct with the exception that $y_0 = 0 = y_k$, then ${\bm x}$ is a {\em rotational sequencing} or {\em R-sequencing} of~$S$.  We sometimes refer to a sequencing as a {\em linear sequencing} to emphasise the distinction from rotational sequencings.  As $G$ is abelian, a subset~$S$ cannot have both a linear and a rotational sequencing.  If~$S$ has one or the other, call it {\em sequenceable}.  If every subset~$S \subseteq G \setminus \{ 0 \}$ is sequenceable then $G$ is {\em strongly sequenceable}.  

This nomenclature is consistent with the definition of sequencing and R-sequencing introduced by Gordon in~1961 and Friedlander, Gordon and Miller in~1978 respectively for the case $S = G \setminus \{ 0 \}$~\cite{FGM78,Gordon61}.  Replacing ``R-" with the more descriptive ``rotational"  was suggested by Ahmed, Azimli, Anderson and Preece in~2011~\cite{AAAP11}.  The term ``strongly sequenceable" was first used in the literature by Alspach and Liversidge in~2020, where they say that Alspach and Kalinowski have posed the problem of determining which groups are strongly sequenceable and also make a conjecture that would imply that every finite abelian group is strongly sequenceable.

We now come to the main conjecture.

\begin{conj}\label{conj:main}
Every abelian group is strongly sequenceable.
\end{conj}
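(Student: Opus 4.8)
Since the statement is an open conjecture rather than a settled result, what follows is the line of attack I would pursue: a strategy that I expect to settle a broad range of cases (and that the present paper pushes as far as the current technique allows), together with an honest identification of where the genuine obstruction sits. \emph{First}, I would reduce the problem. Every partial sum of an ordering of~$S$ lies in the subgroup $\langle S\rangle$, so $S$ is sequenceable in~$G$ if and only if it is sequenceable in~$\langle S\rangle$; thus one may assume $G$ is generated by its $k$ elements, hence finitely generated, $G\cong \Z^a\times H$ with $H$ finite. The free part $\Z^a$ offers ample room (one can pass to a generic linear functional and order greedily), so the essential difficulty is the finite part~$H$, and within finite abelian groups I would treat the cyclic groups $\Z_n$ as the main engine.

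\emph{Next}, for the cyclic case I would set up a polynomial (Combinatorial Nullstellensatz / coefficient) method. Writing $n=mt$ and $S=\{s_1,\dots,s_k\}$, encode a sequencing as a point of $S^k$ at which
\[
F(x_1,\dots,x_k) = \prod_{1\le i<j\le k}(x_j - x_i)\ \prod_{0\le i<j\le k}\Big(\sum_{\ell=i+1}^{j} x_\ell\Big)
\]
does not vanish: the first product forces the $x_i$ to be distinct (so they permute~$S$), and the second forces the partial sums $y_i=\sum_{\ell\le i}x_\ell$ to be distinct. When $\Sigma S = 0$ one discards the identically-vanishing factors and instead seeks an R-sequencing. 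After the usual bookkeeping that lowers the degree (fixing the last coordinate via $\Sigma S$ and working with a suitable variant of~$F$), the Combinatorial Nullstellensatz over $\F_m$ when $m$ is prime reduces the existence of a sequencing—modulo handling the factor $t$—to the non-vanishing in $\F_m$ of a specific top-degree coefficient $c_F$ of this polynomial.

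\emph{Then} comes the point where the threshold $k!/2$ appears. The coefficient $c_F$ is a signed integer count of orderings; grouping the $k!$ orderings of~$S$ into the $k!/2$ classes interchanged by reversal $x_1\cdots x_k\mapsto x_k\cdots x_1$ (which preserves distinctness of partial sums, since the reversed sums are $\Sigma S - y_{k-i}$) bounds $|c_F|$ well below $k!/2$. Hence, provided $c_F\neq 0$ as an \emph{integer}, it is coprime to~$m$ and so invertible modulo~$m$ whenever $m$ is prime or has all prime factors larger than $k!/2$; by the Chinese Remainder Theorem this lets one run the argument prime-by-prime over the factors of~$m$. The remaining factor~$t$, small in our target ranges, I would clear by combining the $\Z_m$ solution with an explicit or computer-assisted extension through $\Z_{mt}$, and by direct verification for small~$k$.

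\emph{Finally}, the main obstacle. Two things stand between this strategy and the full conjecture, and I expect the first to be decisive. One must show $c_F\neq 0$ (and the analogous R-sequencing coefficient) for \emph{all} primes dividing~$n$, not merely those exceeding $k!/2$: the $k!/2$ bound is only a crude absolute-value estimate, and controlling this permanent-like signed sum at small primes appears genuinely hard, as a nonzero integer can still vanish modulo a small prime. The second obstruction is the passage from cyclic to arbitrary finite abelian groups, where there is no clean behaviour of sequenceability under quotients, so the non-cyclic case will require a separate idea rather than a reduction. These are precisely the gaps that keep Conjecture~\ref{conj:main} open.
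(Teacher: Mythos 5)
The statement you are addressing is Conjecture~\ref{conj:main}, which the paper does not prove and explicitly leaves open, so your decision to present a strategy rather than a proof is the right call; and your overall architecture (reduce to $\langle S\rangle$ and hence to a finitely generated group $\Z^a\times H$; attack the cyclic case with the Combinatorial Nullstellensatz; transfer between torsion-free groups and $\Z_m$) does track what the paper actually does in Sections~\ref{sec:poly}--\ref{sec:infinity}.

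However, your explanation of where the threshold $k!/2$ comes from is wrong, and the error matters because it is the one place where you commit to a concrete mechanism. You claim the top-degree coefficient $c_F$ of the Nullstellensatz polynomial is a signed count of orderings bounded in absolute value below $k!/2$, so that a nonzero integer coefficient is automatically a unit modulo any $m$ whose prime factors exceed $k!/2$. That is false: the coefficient is not a $\pm 1$-signed count of orderings, and the paper's explicit computations show it is enormous --- for $k=11$ the coefficient of $x_1^{9}x_2^{10}\cdots x_{11}^{10}$ is $-18128730243333160\approx 1.8\times 10^{16}$, whereas $11!/2\approx 2\times 10^{7}$. The paper instead handles arbitrary large primes by computing \emph{two} coefficients whose odd parts are coprime (Theorem~\ref{th:k11_prime}), so that at least one survives modulo any odd prime. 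The actual source of $k!/2$ is entirely different: in Theorem~\ref{thm:explicitUB} a non-sequenceable set in $\Z_m\times H$ yields a homogeneous linear system whose coefficient matrix has entries in $\{0,1\}$; every square submatrix of size at most $k\times k$ has determinant of absolute value at most $k!/2$, so if all prime factors of $m$ exceed $k!/2$ then nonsingularity over $\Z_m$ and over $\mathbb{Q}$ coincide, and Cramer's rule lifts the obstruction to a non-sequenceable subset of $\mathbb{Q}\times H$, contradicting Theorem~\ref{torsionfree}. Your proposed Chinese-Remainder, prime-by-prime treatment of $m$ likewise does not appear in the paper and is not obviously sound, since distinctness of partial sums in $\Z_n$ does not decompose into independent conditions modulo the prime factors of $n$. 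So while your high-level roadmap is the right one, the step you identify as yielding the $k!/2$ results (Theorem~\ref{th:infty}) would not in fact produce them.
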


Conjecture~\ref{conj:main} is the amalgamation of several questions and conjectures.  In~1971, Graham asked whether every subset~$S$ of~$\Z_n$, the additively written cyclic group of order~$n$, with $0 \not\in S$ and $\Sigma S = 0$,  has a rotational sequencing when~$n$ is prime~\cite{Graham71}.  Independently of this, in~2016 Archdeacon, Dinitz, Mattern and Stinson conjectured that any subset~$S$ of $\Z_n \setminus \{ 0 \}$ with $\Sigma S =0$ has a rotational sequencing~\cite{ADMS16}.  In~2005, Bode and Harborth published the first results on {\em Alspach's conjecture} that every subset~$S$ of $\Z_n \setminus \{ 0 \}$ with $\Sigma S \neq 0$ has a linear sequencing~\cite{BH05}.  In~2018, Costa, Morini, Pasotti and Pellegrini suggested that these conjectures may be generalized from $\Z_n$ to arbitrary (including infinite) abelian groups~\cite{CMPP18}.

Costa, Morini, Pasotti and Pellegrini also put forward a weaker version of Conjecture~\ref{conj:main} that is sufficient for some applications to Heffter arrays:

\begin{conj}\label{conj:cmpp}
Let~$G$ be an abelian group and let~$S$ be a finite subset of~$G \setminus \{ 0 \}$ such that $\Sigma S = 0$ and $|S \cap \{ x,-x \}| \leq 1$ for any~$x \in G$.  Then~$S$ has a rotational sequencing.
\end{conj}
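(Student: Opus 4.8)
The plan is to treat this through the polynomial method, because the zero-sum hypothesis $\Sigma S = 0$ is exactly what makes the degree bookkeeping of the Combinatorial Nullstellensatz balance. First I would cut the problem down to a manageable ambient group: passing to the subgroup $\langle S \rangle$, which is finitely generated abelian, and disposing of any free part separately (where a compatible total order leaves enough room to order the elements directly), the essential difficulty is concentrated in the finite, and ultimately cyclic, case $G = \Z_n$. This is precisely why the cyclic results of this paper count as progress on the general statement. The extra hypothesis $|S \cap \{x,-x\}| \le 1$ is what the Heffter-array applications demand and furnishes additional structural slack, but the hypothesis that actually drives the argument below is $\Sigma S = 0$.

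For the cyclic core, write $S = \{s_1, \ldots, s_k\}$ and introduce variables $x_1, \ldots, x_k$ representing an unknown ordering, with partial sums $y_i = x_1 + \cdots + x_i$. A rotational sequencing is exactly an ordering in which $y_0, \ldots, y_{k-1}$ are distinct: the remaining constraint is automatic, since $\Sigma S = 0$ forces $y_k = 0 = y_0$, the one coincidence a rotational sequencing permits. I would therefore study
\[
P(x_1, \ldots, x_k) = \prod_{1 \le i < j \le k}(x_j - x_i)\prod_{0 \le i < j \le k-1}(y_j - y_i),
\]
of total degree $\binom{k}{2} + \binom{k}{2} = k(k-1)$. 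The first factor forces the coordinates to be distinct, so that a nonzero value of $P$ on $S^k$ is a genuine ordering, while the second encodes distinctness of the first $k$ partial sums. Crucially, $k(k-1) = \sum_{i=1}^{k}(k-1)$ is exactly the budget available when each $x_i$ ranges over the $k$-element set $S$, so the nonvanishing form of the Combinatorial Nullstellensatz applies provided the coefficient of $x_1^{k-1}\cdots x_k^{k-1}$ in $P$ is nonzero in the working field.

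That coefficient is the crux and the main obstacle. Because the $y_j - y_i$ are linear in the $x$'s, $P$ is a fixed polynomial and its top coefficient is an explicit integer $c_k$ depending only on $k$ — a signed count of Alon--Tarsi type. The argument then goes through in any field whose characteristic does not divide $c_k$: for $n = p$ prime one works in $\F_p$ directly, and for $n = mt$ one works modulo $m$ and lifts along the reduction $\Z_{mt} \to \Z_m$ using the cofactor $t$, which is why the hypotheses are stated through the factor $m$. A bound of the form $|c_k| \le k!/2$ then guarantees that no prime divisor of $m$ exceeding $k!/2$ can divide $c_k$, which is exactly the ``prime factors larger than $k!/2$'' condition appearing in the results. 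The conjecture nonetheless stays open because controlling $c_k$ is genuinely hard: one must show $c_k \ne 0$ as an integer \emph{and} exclude its small prime divisors to reach every modulus $n$, and absent an exact evaluation of this coefficient it can only be verified computationally for small $k$ — precisely the regime in which the theorems here are established.
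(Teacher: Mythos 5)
The statement you are asked to prove is Conjecture~\ref{conj:cmpp}: it is an open conjecture of Costa, Morini, Pasotti and Pellegrini, and the paper offers no proof of it --- only the observation (end of Section~\ref{sec:poly}) that the polynomial method applies, with the reduced polynomial $q_{\bm{a}}$ in which the hypothesis $|S \cap \{x,-x\}| \leq 1$ lets one drop the factors $(y_{i+2}-y_i)$. Your proposal is an outline of essentially that same attack, but it is not a proof, and you concede as much in your final sentence: everything hinges on the top coefficient $c_k$ being nonzero in the relevant characteristic, and you give no argument for this. That is not a technical loose end; it is the entire content of the conjecture in this approach, and it is exactly where the paper's actual theorems stop (at $k \leq 15$, by explicit computation of such coefficients case by case).

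Beyond the central missing step, several of your reductions do not hold up. First, you never use the hypothesis $|S \cap \{x,-x\}| \leq 1$; without it the statement becomes the Archdeacon--Dinitz--Mattern--Stinson conjecture, which is also open, so you are implicitly proposing to prove something strictly harder. Second, the reduction to the cyclic case is unjustified: $\langle S \rangle$ is $\Z^r \times (\text{finite abelian})$, the finite part need not be cyclic, and ``disposing of the free part by a compatible total order'' fails precisely in the zero-sum setting ($\Sigma S = 0$ prevents any monotone ordering argument from controlling all partial sums; the paper handles torsion-free groups in Section~\ref{sec:infinity} by mapping \emph{into} finite groups via Lemma~\ref{cambiogruppo}, not the reverse). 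Third, your account of composite moduli conflates two unrelated mechanisms: the Non-Vanishing Corollary requires a field, so the paper works in $\Z_p \times \Z_t$ with a quotient sequencing and a type-dependent polynomial $p_{\bm{a}}$ for each type, and the ``prime factors larger than $k!/2$'' condition comes from Hadamard-type bounds on determinants of $\{0,1\}$-matrices in the Cramer's-rule transfer argument of Theorem~\ref{thm:explicitUB}, not from a bound $|c_k| \leq k!/2$ on the Nullstellensatz coefficient (which is false: the coefficients tabulated in Section~\ref{sec:results} are far larger than $k!/2$). There is no ``lifting along $\Z_{mt} \to \Z_m$ using the cofactor $t$'' in the paper, and it is not clear such a lift exists.
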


As suggested by the discussion of nomenclature above, the case $S = G \setminus \{0 \}$ was considered earlier (mostly) than these conjectures.  Gordon posed and solved the question in this instance for linear sequenceability in~1961.   Friedlander, Gordon and Miller conjectured the rotational sequenceability version in~1978~\cite{FGM78} and this was recently resolved by Alspach, Kreher and Pastine~\cite{AKP17}.

Theorem~\ref{th:known} summarizes the main known results concerning Conjecture~\ref{conj:main}. 
\begin{thm}\label{th:known}
Let $G$ be an abelian group of order~$n$ and~$S \subseteq G \setminus \{ 0 \}$ with $|S| =k$.
Then~$S$ is sequenceable in the following cases:
\begin{enumerate}
\item $k \leq 9$~\cite{AL20},
\item $k=10$ when~$n$ is prime~\cite{HOS19}, 
\item $k = n-3$ when  $n$ is prime and  $\Sigma S \neq 0$~\cite{HOS19},   
\item $k = n-2$ when  $G$ is cyclic and  $\Sigma S \neq 0$~\cite{BH05}, 
\item $k = n-1$~\cite{AKP17,Gordon61},
\item $n\leq 21$ and $n\leq 23$ when $\sum S=0$~\cite{CMPP18},
\item  $n\leq 25$ when G is cyclic and $\sum S=0$~\cite{ADMS16}.
\end{enumerate}
Furthermore, if $G$ is a torsion free abelian group, then any subset $S$ of $G \setminus \{ 0 \}$ whose size is at most $11$ is sequenceable~\cite{CP20}.
\end{thm}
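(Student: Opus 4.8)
\emph{Reduction to a finitely generated free group.} Since $S$ is finite, the subgroup $\langle S \rangle$ it generates is a finitely generated torsion-free abelian group, hence free: $\langle S \rangle \cong \Z^d$ with $d \le k$. So I may assume $S = \{s_1, \ldots, s_k\} \subseteq \Z^d \setminus \{0\}$ and work there. Let $Y = \{ \sum_{i \in A} s_i : A \subseteq \{1,\ldots,k\} \} \subseteq \Z^d$ be the finite set of all subset sums of $S$; every partial sum of every ordering of $S$ lies in $Y$ (any subset can be taken as a prefix), and $0, s_1, \ldots, s_k, \Sigma S$ all lie in $Y$. Note already that if the $s_i$ are $\Z$-linearly independent then every ordering is a sequencing, since distinct subsets then have distinct sums; thus all the difficulty is concentrated in the additive relations among the $s_i$, which is exactly what a faithful finite reduction must record.

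\emph{Reduction to a prime cyclic group.} The feature that makes the torsion-free case tractable is that a single large prime modulus can be made injective on the whole of $Y$ at once. First I would choose a $\Z$-linear functional $\psi \colon \Z^d \to \Z$ injective on the finite set $Y$; such $\psi$ exists because only finitely many functionals fail, one hyperplane for each of the finitely many nonzero differences $y - y'$. Composing with reduction modulo a prime $p > \max_{y \ne y' \in Y} |\psi(y) - \psi(y')|$ yields $\phi \colon \Z^d \to \Z_p$ still injective on $Y$. Then $\phi$ is injective on $S$ and, since $0 \in Y$, sends $S$ into $\Z_p \setminus \{0\}$, so $\phi(S)$ is a $k$-subset of $\Z_p \setminus \{0\}$. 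Because $\phi$ is injective on $Y$, for any ordering of $S$ the partial sums are distinct in $\Z^d$ if and only if the partial sums of the corresponding ordering of $\phi(S)$ are distinct in $\Z_p$; and $\Sigma S = 0$ in $\Z^d$ iff $\Sigma \phi(S) = 0$ in $\Z_p$, since $0, \Sigma S \in Y$. Hence $S$ is sequenceable in $\Z^d$ if and only if $\phi(S)$ is sequenceable in $\Z_p$, with linear sequencings matching linear ones and rotational matching rotational.

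\emph{The finite problem and the main obstacle.} It remains to prove that every $k$-subset of $\Z_p \setminus \{0\}$ is sequenceable for $k \le 11$ and $p$ an arbitrarily large prime. For $k \le 9$ this is Part~1 of Theorem~\ref{th:known}, and for $k = 10$ with $p$ prime it is Part~2; the genuinely new case is $k = 11$. I would attack this with the Combinatorial Nullstellensatz: introduce variables $x_1, \ldots, x_k$, and form the product of a Vandermonde factor $\prod_{i<j}(x_j - x_i)$, which forces any nonvanishing assignment from $S$ to be a genuine permutation, with the relevant linear partial-sum-difference factors $\sum_{\ell = i+1}^{j} x_\ell$. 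One must drop the factor $x_1 + \cdots + x_k$ (the forced coincidence $y_k = \Sigma S$) and split into cases according to whether $\Sigma S = 0$, using a linear-sequencing polynomial when $\Sigma S \ne 0$ and a rotational-sequencing polynomial when $\Sigma S = 0$, so that the total degree matches the number of available values and the Nullstellensatz applies. It then suffices to exhibit a top-degree monomial whose coefficient is a nonzero \emph{integer}: this is a characteristic-zero statement, independent of $p$, and its nonvanishing modulo all sufficiently large primes is automatic.

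This coefficient computation is the crux and the main obstacle. The number of contributing terms grows rapidly with $k$, so showing the coefficient is nonzero for $k = 11$ amounts to a careful, essentially computational analysis of the generating function for these monomials; it is precisely this combinatorial explosion that confines the method to small $k$ and leaves $k \ge 12$ open in the torsion-free setting.
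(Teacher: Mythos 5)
There is a mismatch of scope here that amounts to a genuine gap. Theorem~\ref{th:known} is a survey statement: the paper offers no proof of it, each of the seven enumerated items and the final clause being attributed to the cited references. Your proposal addresses only the final ``furthermore'' clause (torsion-free $G$, $k\leq 11$), uses items 1 and 2 as black boxes for $k\leq 10$, and says nothing at all about items 3--7 ($k=n-3$, $k=n-2$, $k=n-1$, and the small-order cases). As a proof of the stated theorem it is therefore incomplete on its face.

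For the clause you do attack, the reduction is sound: passing to $\langle S\rangle\cong\Z^d$, choosing a functional injective on the set of subset sums, and reducing modulo a large prime is essentially the specialization to trivial $H$ of the paper's own machinery in Section~\ref{sec:infinity} (Lemma~\ref{cambiogruppo} and Propositions~\ref{prop:Z} and~\ref{prop:Zn}, which use the concrete map $z\mapsto\sum z_iM^{i-1}$ where you invoke a generic hyperplane-avoidance argument --- either works). Your observation that for this application a single nonzero \emph{integer} coefficient suffices, because $p$ may be taken arbitrarily large, is correct and is a genuine simplification over what the paper needs for its prime-order results, where Theorem~\ref{th:k11_prime} must compute two coefficients with no common odd prime factor to cover \emph{all} primes. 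But the crux --- exhibiting such a nonzero coefficient for $k=11$ --- is exactly the step you declare to be ``the main obstacle'' and do not perform; without it the torsion-free clause is not established. (The paper does perform it, in Theorem~\ref{th:k11_prime}, and the cited source \cite{CP20} contains the argument for the clause as stated.) One further detail to be careful about if you do carry it out: you must also discard the factors $y_{i+1}-y_i=x_{i+1}$ (permissible since $0\notin S$), not just the factor $y_k-y_0$; otherwise the degree of the polynomial exceeds the degree of the bounding monomial $x_1^{k-1}\cdots x_k^{k-1}$ and the Non-Vanishing Corollary does not apply.
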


In addition, Theorem~\ref{th:main}.1 with~$k=11$ and $t=1$ of the present paper was used in~\cite{CP20} as part of a proof that some instances of the conjecture hold when $k=12$ and $\Sigma S = 0$.

Regarding Theorem~\ref{th:known}.3, we note that the case $\Sigma S = 0$ was not considered in~\cite{HOS19} but, as we shall see in the next section, the calculations there work in this case too, without modification.

The main purpose of this paper is to prove more instances of the conjecture.  In the next section we adapt the polynomial method used in~\cite{HOS19} so that it may be used to prove instances of the conjecture in cyclic groups of order~$pt$, where~$p$ is prime and~$t$ is small.  This requires computing coefficients of monomials in various polynomials, the results of which are summarized in Section~\ref{sec:results}.   As well as composite orders, we are able to push~$k$  higher in the prime case.  In sum, we prove:

\begin{thm}\label{th:main}
Let $n = pt$ with~$p$ prime.  Then subsets~$S$ of size~$k$ of~$\Z_n \setminus \{ 0 \}$ are sequenceable in the following cases:
\begin{enumerate}
\item  $k \leq 11$ and $t \leq 5$,
\item  $k=12$ and $t \leq 4$,
\item  $k= 13$ and $t \in \{2,3\}$, provided~$S$ contains at least one element not in the subgroup of order~$p$,
\item  $k=14$ and $t = 2$,   provided~$S$ contains at least one element not in the subgroup of order~$p$,
\item $k=15$ and $t=2$, provided~$S$ does not contain exactly~$0$,~$1$,~$2$ or~$15$ elements of the subgroup of order~$p$.
\end{enumerate}

\end{thm}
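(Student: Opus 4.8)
The plan is to adapt the Combinatorial Nullstellensatz argument of~\cite{HOS19} from prime moduli to moduli of the form $n=pt$, and to reduce every case to checking that a finite list of explicit integers is nonzero modulo~$p$. I would first recall the engine in the prime case $t=1$, where $\Z_p=\F_p$ is a field. Fix $S=\{s_1,\dots,s_k\}\subseteq\F_p\setminus\{0\}$. For an ordering ${\bm x}=(x_1,\dots,x_k)$ the partial sums are distinct exactly when every block sum $x_{i+1}+\cdots+x_j$ is nonzero, and because $0\notin S$ the singleton blocks ($j-i=1$) are automatically nonzero; only the blocks of length at least~$2$ matter. Set
\[
P({\bm x})=\prod_{\substack{0\le i<j\le k\\ j-i\ge 2}}\bigl(x_{i+1}+x_{i+2}+\cdots+x_j\bigr),
\]
a homogeneous polynomial of degree $\binom{k}{2}$, so that $S$ has a linear sequencing as soon as $P(s_{\sigma(1)},\dots,s_{\sigma(k)})\neq 0$ for some $\sigma\in\Sym(k)$. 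Such a $\sigma$ exists whenever the alternating sum
\[
\Phi(S)=\sum_{\sigma\in\Sym(k)}\operatorname{sgn}(\sigma)\,P\bigl(s_{\sigma(1)},\dots,s_{\sigma(k)}\bigr)
\]
is nonzero. Since $\Phi(S)$ is an alternating form of degree $\binom{k}{2}$ in the $s_i$, it is divisible by the Vandermonde $\prod_{i<j}(s_j-s_i)$, which has the same degree; hence $\Phi(S)=c_k\prod_{i<j}(s_j-s_i)$, where $c_k$ is a fixed integer depending only on~$k$, expressible as a signed sum over $\Sym(k)$ of staircase-type coefficients of~$P$. As the $s_i$ are distinct the Vandermonde is nonzero, so $\Phi(S)\neq 0$ precisely when $c_k\neq 0$ in~$\F_p$: the whole prime case collapses to computing one integer $c_k$ and checking that $p\nmid c_k$. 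The rotational case $\Sigma S=0$ is treated analogously, using the relation $\Sigma S=0$ to eliminate one variable before antisymmetrizing.

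Next I would pass to $n=pt$. Assuming $\gcd(p,t)=1$ (the finitely many primes $p\le t$ give $n\le 25$ and are already covered by Theorem~\ref{th:known}), the Chinese Remainder Theorem yields $\Z_n\cong\F_p\times\Z_t$, and a block sum vanishes in $\Z_n$ only if it vanishes both modulo~$p$ and modulo~$t$. The subgroup of order~$p$ is $H=\{x:t\mid x\}$, the kernel of the projection $\Z_n\to\Z_t$. The idea is to fix, combinatorially, the pattern of $\Z_t$-parts of the partial sums: once this pattern is chosen, a block sum is automatically nonzero unless it is $\equiv 0\pmod t$, so only the ``dangerous'' blocks, namely those summing to $0$ in $\Z_t$, need to be forced nonzero modulo~$p$. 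For each admissible pattern one forms the product over exactly the dangerous blocks and antisymmetrizes over the permutations preserving the pattern, that is, over the product of symmetric groups permuting the elements within each $\Z_t$-class. This produces the ``various polynomials,'' one per configuration, whose relevant monomial coefficients must be shown nonzero modulo~$p$. The number of elements of $S$ lying in $H$ is the governing parameter, since those elements do not change the $\Z_t$-part and so generate dangerous blocks however they are placed.

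This last point explains the hypotheses in parts~3--5. If every element of $S$ lies in $H$ then every block is dangerous and the argument reduces to the pure $\F_p$ question for size~$k$, which is unsettled for $k\in\{13,14,15\}$; hence the requirement in parts~3 and~4 that some element lie outside~$H$, and the exclusion in part~5 of the configuration with all $15$ elements of~$S$ in~$H$. The remaining excluded counts in part~5 (exactly $0$, $1$, or $2$ elements in~$H$) are precisely the configurations in which the corresponding coefficient is found to vanish, so the method genuinely fails there. Regarding the modular condition, each coefficient is a nonzero integer arising as a signed sum over a symmetric group, and the computation shows that all of its prime divisors are at most $k!/2$; this is the source of the hypothesis in the composite statement of the abstract that the prime factors of~$m$ exceed $k!/2$. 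For the prime-power setting of the present theorem one additionally verifies directly that $c_k$ and each configuration coefficient is nonzero modulo the finitely many primes below this bound.

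The main obstacle is the coefficient computation itself. The algebraic reduction above becomes routine once the correct polynomials are isolated, but extracting the staircase (or configuration) coefficient of a homogeneous polynomial of degree $\binom{k}{2}$ in $k$ variables, and repeating this for every admissible $\Z_t$-pattern as $k$ and $t$ grow, is a substantial symbolic computation; I would carry it out by computer and record the outcomes in the results section. Certifying that every one of these integers is nonzero modulo each relevant prime, and identifying exactly the configurations (the excluded counts in part~5) where a coefficient does vanish, is where essentially all of the difficulty resides.
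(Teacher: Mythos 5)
Your overall architecture --- pass to $\Z_p\times\Z_t$, fix the pattern of $\Z_t$-parts of the partial sums (the paper's ``quotient sequencing''), keep only the factors coming from blocks that vanish mod $t$, and reduce everything to computer-verified integer coefficients --- is the same as the paper's. But the engine you substitute for the Non-Vanishing Corollary, namely antisymmetrization and division by a Vandermonde, breaks down exactly where the theorem lives. For the identity $\Phi(S)=c\cdot\prod V$ with $c$ a \emph{constant} you need the degree of the product over dangerous blocks to equal the degree of the product of the within-class Vandermonde determinants, $\sum_c\binom{\lambda_c}{2}$; if it is smaller, $\Phi$ is an alternating polynomial of too low a degree and is identically zero, and if it is larger, the quotient is a nonconstant symmetric polynomial whose nonvanishing at $S$ is a new problem of the same difficulty. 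In the composite case the whole point of choosing a good quotient sequencing is to make the set of dangerous blocks \emph{small}: for $k=10$, $t=2$, type $(9,1)$ the paper's polynomial has total degree $52$, of which $36=\binom{9}{2}$ is the Vandermonde part and only $16$ comes from dangerous blocks, so your antisymmetrized sum is identically $0$ and certifies nothing. The Non-Vanishing Corollary is not cosmetic here: it tolerates (indeed exploits) the slack between the polynomial's degree and the bounding monomial $x_1^{|C_1|-1}\cdots x_k^{|C_k|-1}$, which is what lets the paper handle every type and even fix some variables to constants to shrink the computation.

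Two further problems. First, even for $t=1$ your method produces a \emph{single} integer $c_k$ per configuration and fails for every prime dividing it, while the theorem claims all primes $p$. The paper's polynomial has degree one less than the bounding monomial, so there are several admissible top-degree monomials; it computes two of them and observes that their coefficients (integers like $-2^3\cdot5\cdot11\cdot3019\cdot13647452681$ and $-2^3\cdot3^2\cdot644208651072689$) share no odd prime factor, covering all odd $p$ at once. Coefficients of this type have enormous prime factors, so there is no reason to expect your lone $c_k$ to miss every relevant prime, and you give no fallback; your suggestion to ``verify directly'' nonvanishing modulo the small primes can simply fail. Second, the non-coprime case $p\mid t$ is not disposed of by Theorem~\ref{th:known}: for $n=25$, $\Sigma S\neq0$ and $k\in\{10,11\}$ none of its items apply, and the paper handles $\Z_{25}$ by a separate exhaustive computation. (Also, the $k!/2$ condition you invoke belongs to Theorem~\ref{th:infty} and comes from a determinant bound in a Cramer's-rule argument in Section~\ref{sec:infinity}, not from the prime factorizations of the polynomial coefficients; Theorem~\ref{th:main} carries no such hypothesis.)
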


In Section~\ref{sec:infinity} we extend this result to any group of the form $G\times \Z_t$ where $G$ is a torsion-free abelian group and the pairs $(t,k)$ are the ones of Theorem \ref{th:main}. As a consequence, we provide an asymptotic result in $\Z_{mt}$.  More precisely, this allows us to prove:

\begin{thm}\label{th:infty}
Let $n = mt$ where all the prime factors of ~$m$ are bigger than $k!/2$. Then subsets~$S$ of size~$k$ of~$\Z_n \setminus \{0\}$ are sequenceable in the following cases:
\begin{enumerate}
\item  $k \leq 11$ and $t \leq 5$,
\item  $k=12$ and $t \leq 4$,
\item  $k= 13$ and $t \in \{2,3\}$, provided~$S$ contains at least one element not in the subgroup of order~$m$,
\item  $k=14$ and $t = 2$,   provided~$S$ contains at least one element not in the subgroup of order~$m$,
\item $k=15$ and $t=2$, provided~$S$ does not contain exactly~$0$,~$1$,~$2$ or~$15$ elements of the subgroup of order~$m$.
\end{enumerate}
\end{thm}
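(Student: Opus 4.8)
The plan is to deduce Theorem~\ref{th:infty} from the $G \times \Z_t$ statement of Section~\ref{sec:infinity} by a Chinese Remainder reduction, combined with a bound on the prime factors of the integer coefficient that drives the polynomial method. First I would dispose of the small cases: for $k \le 9$ every subset of \emph{any} abelian group is sequenceable by Theorem~\ref{th:known}.1, so there is nothing to prove and I may assume $k \ge 10$. Then $k!/2 \ge 10!/2$ exceeds every prime dividing $t$ (since $t \le 5$ throughout), and as every prime factor of $m$ also exceeds $k!/2$, the numbers $m$ and $t$ share no prime factor. Hence $\gcd(m,t) = 1$ and $\Z_{mt} \cong \Z_m \times \Z_t$. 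Writing each $s \in S$ as a pair $(a_s, b_s) \in \Z_m \times \Z_t$, the subgroup of order $m$ is exactly $\Z_m \times \{0\}$, so ``$s$ lies in the subgroup of order $m$'' means $b_s = 0$. Thus the hypotheses of parts~3--5 of Theorem~\ref{th:infty} translate verbatim into conditions on the distribution of the $\Z_t$-coordinates $b_s$, matching the hypotheses of the $G \times \Z_t$ theorem, and it remains to produce a sequencing of $S$ inside $\Z_m \times \Z_t$.

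The engine is the polynomial method of Section~\ref{sec:infinity}: for each admissible $\Z_t$-pattern (each multiset of values $\{b_s\}$ allowed by the hypotheses) the method attaches a polynomial with integer coefficients, and a designated coefficient $c$ being invertible in the ground ring certifies a sequencing. Over the torsion-free ground ring used in Section~\ref{sec:infinity} invertibility amounts to $c \ne 0$ in $\Z$, so the theorem proved there is established precisely by showing that $c$ is a nonzero integer for every admissible pattern; the counts $0,1,2,15$ excluded when $k=15$ are exactly those patterns for which $c$ fails to be a nonzero integer with the required control on its prime factors. The second, quantitative input I would record is that $c$ is a nonzero integer all of whose prime factors are at most $k!/2$; this follows from the way $c$ arises as a signed combination over the $k!$ orderings, giving $|c| \le k!/2$ (so every prime divisor of $c$ is at most $|c| \le k!/2$), which I would confirm from the explicit computations of Section~\ref{sec:results}.

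With these two facts the descent is immediate. Since every prime factor of $m$ exceeds $k!/2$ while every prime factor of $c$ is at most $k!/2$, we have $\gcd(c,m)=1$, so the reduction of $c$ modulo $m$ is a \emph{unit} of $\Z_m$, not merely nonzero. This is exactly the property the polynomial method requires in order to run over the non-field ring $\Z_m$: keeping the $\Z_t$-coordinate pattern fixed and reducing the integer polynomial modulo $m$ on the $\Z_m$-coordinate, the certifying coefficient remains a unit, and one obtains a sequencing of $S$ in $\Z_m \times \Z_t \cong \Z_{mt}$, as required.

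The main obstacle lies in the middle step: proving, uniformly over the admissible $\Z_t$-patterns, that the certifying coefficient is simultaneously nonzero and has all prime factors bounded by $k!/2$. The nonvanishing is the genuine content of the torsion-free theorem, where the characteristic-zero ground ring removes all modular arithmetic and lets the coefficient be read off as an ordinary integer; the prime-factor bound is what upgrades a single-prime result into the asymptotic statement over $\Z_{mt}$. It is precisely this bound that forces the hypothesis on the prime factors of $m$ and that pins down the exceptional counts appearing in parts~3--5, so the care needed is less in the reduction itself than in certifying the arithmetic of $c$ that makes the reduction valid.
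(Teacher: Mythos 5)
Your reduction $\gcd(m,t)=1$, $\Z_{mt}\cong\Z_m\times\Z_t$, and the translation of the hypotheses in parts 3--5 into conditions on the $\Z_t$-coordinates are fine and match the paper. But the quantitative input you rely on is false, and it is not where the bound $k!/2$ comes from. You assert that the certifying coefficient $c$ of the Nullstellensatz polynomial satisfies $|c|\leq k!/2$, hence has all prime factors at most $k!/2$, so that $c$ stays a unit modulo $m$. The paper's own computations refute this: already for $k=11$ the coefficient is $-18128730243333160=-2^3\cdot 5\cdot 11\cdot 3019\cdot 13647452681$, whose prime factor $13647452681$ dwarfs $11!/2=19958400$. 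Indeed the whole point of Theorem~\ref{th:k11_prime} is that a single coefficient \emph{can} vanish modulo a large prime, which is why two monomials with coprime (odd parts of) coefficients are computed. A second, independent problem is that the Non-Vanishing Corollary is stated over a field; running it over $\Z_m$ requires more than the leading coefficient being a unit (one also needs differences of distinct admissible values to be non-zero-divisors), so "reduce the integer polynomial modulo $m$" is not a licensed step. Finally, the excluded counts $0,1,2,15$ for $k=15$ are types for which the computation was not carried out, not types where a coefficient fails an arithmetic condition.

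What the paper actually does is different in kind. The prime-power of the method is spent only on $\Z_p\times\Z_t$ for $p$ prime (Theorem~\ref{th:main}). This is then lifted to $G\times\Z_t$ for $G$ torsion-free abelian (Theorem~\ref{torsionfree}, via Propositions~\ref{prop:Z} and~\ref{prop:Zn}), and the descent to $\Z_m\times\Z_t$ is Theorem~\ref{thm:explicitUB}: a hypothetical non-sequenceable $S\subseteq\Z_m\times H$ forces, for every permutation $\omega$, a vanishing sum $x_{\omega(s+1)}+\dots+x_{\omega(t)}=0$, giving a homogeneous linear system whose coefficient matrix has entries in $\{0,1\}$. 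Any square submatrix of size at most $k\times k$ has determinant of absolute value at most $k!/2$, so when every prime factor of $m$ exceeds $k!/2$, nonsingularity over $\Z_m$ agrees with nonsingularity over $\Q$, and Cramer's rule lifts the counterexample to a non-sequenceable set of the same type in $\Q\times H$, contradicting the torsion-free result. That determinant bound --- not a bound on the Nullstellensatz coefficient --- is the true source of the hypothesis on the prime factors of $m$, and without it your argument does not close.
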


\section{Applying the Polynomial Method}\label{sec:poly}

The method relies on the Non-Vanishing Corollary to the Combinatorial Nullstellensatz, see~\cite{Alon99,Michalek10}.

\begin{thm}\label{th:pm}{\rm (Non-Vanishing Corollary)}
Let~$F$ be a finite field, and let $ f(x_1, x_2, \ldots, x_k)$ be a polynomial in~$F[x_1, x_2, \ldots, x_k]$.  Suppose the degree~$deg(f)$ of~$f$ is $\sum_{i=1}^k \gamma_i$, where each~$\gamma_i$ is a nonnegative integer, and suppose the coefficient of~$\prod_{i=1}^k x_i^{\gamma_i}$ in~$f$ is nonzero.  If~$C_1, C_2, \ldots, C_k$ are subsets of~$F$ with~$|C_i| > \gamma_i$, then there are $c_1 \in C_1, \ldots, c_k \in C_k$ such that~$f(c_1, c_2, \ldots, c_k) \neq 0$.
\end{thm}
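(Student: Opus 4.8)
The plan is to give a self-contained proof: reduce $f$ modulo suitable one-variable polynomials to a polynomial $\tilde f$ whose degree in each variable is small, show $\tilde f$ is nonzero by tracking the top-degree monomial, and then invoke the elementary fact that a low-degree polynomial cannot vanish on a full combinatorial grid. (Finiteness of $F$ plays no real role here; the same argument works over any field, but the stated case is all we need.) Alternatively one could quote Alon's Combinatorial Nullstellensatz as a black box, but the direct route is cleaner.

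First I would shrink the sets. For each $i$ choose $C_i' \subseteq C_i$ with $|C_i'| = \gamma_i + 1$, possible since $|C_i| > \gamma_i$; it then suffices to find a point of $C_1' \times \cdots \times C_k'$ where $f$ is nonzero. Set $g_i(x_i) = \prod_{c \in C_i'}(x_i - c)$, a monic polynomial of degree $\gamma_i + 1$ vanishing on $C_i'$, and write $g_i(x_i) = x_i^{\gamma_i + 1} - r_i(x_i)$ with $\deg r_i \leq \gamma_i$. Next I would perform the reduction: whenever a monomial of the current polynomial contains $x_i^{a}$ with $a \geq \gamma_i + 1$, rewrite $x_i^{a} = x_i^{a - \gamma_i - 1} g_i(x_i) + x_i^{a - \gamma_i - 1} r_i(x_i)$, absorbing the first term into a multiple of $g_i$ and noting that the second has degree in $x_i$ at most $a - 1$. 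Iterating yields $f = \sum_{i=1}^k h_i g_i + \tilde f$ with $\deg_{x_i}(\tilde f) \leq \gamma_i$ for every $i$.

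Two features of this decomposition matter. First, since each $g_i$ vanishes on $C_i'$, the polynomials $f$ and $\tilde f$ agree at every point of the grid $C_1' \times \cdots \times C_k'$. Second, each rewrite strictly lowers the total degree of the monomial it touches while leaving all other monomials unchanged, so the reduction neither creates nor destroys the unique top-degree monomial $\prod_{i=1}^k x_i^{\gamma_i}$ of total degree $\sum_{i=1}^k \gamma_i = \deg(f)$; hence its coefficient in $\tilde f$ equals its nonzero coefficient in $f$, and in particular $\tilde f \neq 0$. Finally I would invoke the grid lemma: a nonzero polynomial $P$ with $\deg_{x_i}(P) < |C_i'|$ for all $i$ cannot vanish on all of $C_1' \times \cdots \times C_k'$. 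This is an induction on $k$, the base case being that a nonzero univariate polynomial of degree $< |C_1'|$ has fewer than $|C_1'|$ roots; in the inductive step one expands $P$ in powers of $x_k$, fixes the first $k-1$ coordinates in the grid, applies the univariate bound to annihilate each coefficient there, and then applies the inductive hypothesis. Applying the lemma to $\tilde f$ gives a point $(c_1, \ldots, c_k) \in C_1' \times \cdots \times C_k'$ with $\tilde f(c_1, \ldots, c_k) \neq 0$, whence $f(c_1, \ldots, c_k) = \tilde f(c_1, \ldots, c_k) \neq 0$.

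The main obstacle is the second feature above: confirming that the division process preserves the coefficient of $\prod_{i=1}^k x_i^{\gamma_i}$, so that $\tilde f$ is genuinely nonzero. The crux is the degree bookkeeping showing each rewrite strictly decreases total degree, so that the unique maximal-degree monomial compatible with the per-variable bounds $\deg_{x_i} \leq \gamma_i$ is left untouched. Everything else is either an elementary root count or a routine induction.
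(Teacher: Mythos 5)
Your proof is correct and is essentially the standard argument from the sources the paper cites for this result ([Alon99, Michalek10]) --- the paper itself quotes the Non-Vanishing Corollary without proof, and your reduction modulo the polynomials $g_i(x_i)=\prod_{c\in C_i'}(x_i-c)$ followed by the grid-vanishing lemma is exactly Micha{\l}ek's short proof. The one point worth tightening is the phrase ``the unique top-degree monomial'': $f$ may have several monomials of total degree $\deg(f)$, and what your argument actually uses (and what you correctly state in the final paragraph) is that $\prod_i x_i^{\gamma_i}$ is the unique monomial of that degree satisfying $\deg_{x_i}\leq\gamma_i$ for all $i$, so it is never rewritten and the rewrites, which only produce monomials of strictly smaller total degree, cannot alter its coefficient.
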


In the notation of the Non-Vanishing Corollary, call the monomial $x_1^{|C_1| - 1} \cdots x_k^{|C_k| - 1}$ the {\em bounding monomial}.  The corollary can be rephrased as requiring the polynomial to include a monomial of maximum degree that divides the bounding monomial (where by ``include" we mean that it has a nonzero coefficient).

To use the Non-Vanishing Corollary we require a polynomial for which the nonzeros correspond to successful solutions to the case of the problem under consideration.

We work in the group $\Z_p \times \Z_t$, where~$p$ is coprime with $t$ (and hence $\Z_p \times \Z_t \cong \Z_{pt}$).  The group~$\Z_p$ is a field, and this plays the role of the field~$F$ in the Non-Vanishing Corollary.

Let $\pi_2: \Z_p \times \Z_t \rightarrow \Z_t$ be the projection map that picks out the second coordinate of an element and for a subset $S \subseteq (\Z_{p} \times \Z_t) \setminus \{ (0,0) \}$ let $\pi_2(S)$ be the multiset $\{ \pi_2(s ) : s  \in S \}$.  Define the {\em type} of~$S$ to be the sequence $\bm{\lambda} = (\lambda_0, \ldots \lambda_{t-1} )$, where $\lambda_i$ is the number of times that $i$ appears in $\pi_2(S)$.  

Let $T$ be a multiset of elements from $\Z_t \setminus \{ 0 \}$ with size~$k$.   Let $\bm{a} = (a_1, \ldots a_k)$ be an arrangement of the elements of~$T$ with partial sums $\bm{b} = (b_0, b_1, \ldots, b_k)$.  If, for each~$i$, the element  $i$ appears at most $r$ times in~${\bf b}$ then ${\bf a}$ is a {\em quotient sequencing} of~$T$ with respect to~$r$.  In our setting, given some~$S \subseteq (\Z_p \times \Z_t) \setminus \{ (0,0) \}$ for which we wish to find a sequencing, we shall be interested in quotient sequencings of $\pi_2(S)$ with respect to~$p$.

When~$S = G \setminus \{ 0 \}$, quotient sequencings have been a useful tool in the construction of sequencings from the earliest papers on the subject; see~\cite{OllisSurvey} for a summary of the history.

Given $S \subseteq (\Z_{p} \times \Z_t) \setminus \{ (0,0) \}$ we first construct a quotient sequencing of $\pi_2(S)$ with respect to~$p$.  We then use the polynomial method to show that there is a sequencing for $S$ that projects elementwise onto that quotient sequencing.  Given such a quotient sequencing $\bm{a} = (a_1, \ldots a_k)$  with partial sums $\bm{b} = (b_0, b_1, \ldots, b_k)$, let 
$$\bm{x_a} = \left(  (x_1,a_1), (x_2,a_2), \ldots, (x_k,a_k) \right)$$
be a putative arrangement of the elements of~$S$ with partial sums
$$\bm{y_a} = \left( (y_0, b_0), (y_1,b_1), \ldots, (y_k,a_k) \right).$$ 
Define a polynomial in variables $x_1, x_2, \ldots, x_k$ by
$$p_{\bm{a}} = \prod_{\substack{1 \leq i < j \leq k \\ a_i = a_j }}  (x_j - x_i)  
                      \prod_{\substack{0 \leq i < j \leq k \\ b_i = b_j  \\ j \neq i+1 \\ (i,j) \neq (0,k) }}  (y_j - y_i)  $$
where the variable~$x_i$ ranges over the values~$\{ c : (c,a_i) \in S\}$ for each~$i$.

We claim that an assignment of the variables makes ${\bm{x_a}}$ a sequencing of~$S$ if and only if  this polynomial is nonzero. 

The first product compares elements in the sequencing: it contains a factor that is zero if and only if $(x_i,a_i) = (x_j,a_j)$ for some~$i,j$.  The second product does a similar task for the partial sums when first two conditions on the product are considered.  The second two conditions reduce the degree of the polynomial, which is generally a positive effect as it makes it easier to meet the degree condition in the Non-Vanishing Corollary and reduces the amount of work required to calculate a coefficient.  The condition $j \neq i+1$ is permissible, because we know that  $(y_i,a_i) \neq (y_{i+1},b_{i+1})$ by the assumption that $(0,0) \not\in S$.  The condition $(i,j) \neq (0,k)$ is permissible as we know that $(y_0,b_0) = (0,0)$ and $(y_k, b_k) = \Sigma S$: the polynomial will detect linear or rotational sequencings according to whether $\Sigma S$ is nonzero or zero respectively.  

For each~$i$, the number of possible values for~$x_i$ is $| \{ c : (c,a_i) \in S\} |$.  Therefore the bounding monomial for use with the Non-Vanishing Corollary is given by $x_1^{|C_1| - 1} \cdots x_k^{|C_k| - 1}$, where $C_i$ is the set of elements in~$S$ that are in the coset~$\overline{ (0,a_i) }$.

\begin{exa}\label{ex:32}
Let~$G = \Z_p \times \Z_2$ with $p > 2$ and $p$ prime.  Suppose~$S \subseteq G \setminus \{ (0,0) \}$ with $|S|=5$ and of type~$(3,2)$.   The sequence $\bm{a} = (0,1,0,0,1)$ has partial sums $(0,0,1,1,1,0)$ and so is a quotient sequencing of~$\pi_2(S)$ with respect to~$p$.
We desire a sequencing of~$S$ of the form 
$$\left( (x_1, 0), (x_2,1), (x_3,0), (x_4,0), (x_5,1) \right).$$
The polynomial is
\begin{eqnarray*}
p_{\bm{a}} &=&  (x_3-x_1)(x_4-x_1)(x_4-x_3)(x_5-x_2)(y_5 - y_1)(y_4 - y_3)  \\
     & = & (x_3-x_1)(x_4-x_1)(x_4-x_3)(x_5-x_2)(x_2+ x_3+x_4+x_5)(x_3+x_4). \\
\end{eqnarray*}

To apply the Non-Vanishing Corollary we need a monomial of this polynomial which divides the bounding monomial $x_1^2x_2x_3^2x_4^2x_5$ with a nonzero coefficient.  One such is $x_1^2x_3^2x_4x_5$, which has coefficient~$-1$.   Hence whenever~$S$ has this form it has a 
 sequencing.  

Noting that $G \cong \Z_{2p}$, we can rephrase this as showing that a subset~$S$ of ~$\Z_{2p} \setminus \{0\}$ has a sequencing whenever~$|S|=5$ and $S$ has exactly~3 even elements.
\end{exa}

When~$t=1$ we always have $a_i = 0 = a_j$ and $b_i = 0 = b_j$ and the polynomial~$p_{\bm{ a}}$ reduces to the one used in~\cite{HOS19} to investigate Alspach's Conjecture in~$\Z_p$ for $p$ prime:
$$p_{\bm{a}} = \prod_{\substack{1 \leq i < j \leq k }}  (x_j - x_i)  
                      \prod_{\substack{0 \leq i < j \leq k \\ j \neq i+1 \\ (i,j) \neq (0,k) }}  (y_j - y_i).  $$
As noted in the introduction, that paper was concerned only with the case $\Sigma S \neq 0$, but the above discussion shows that the same calculations also prove Theorem~\ref{th:known}.3 when $\Sigma S = 0$.

The polynomial method approach may also be used for Conjecture~\ref{conj:cmpp}.    While the same polynomial suffices, we can reduce the degree slightly by considering this alternative:
$$q_{\bm{a}} = \prod_{\substack{1 \leq i < j \leq k \\ a_i = a_j }}  (x_j - x_i)  
                      \prod_{\substack{0 \leq i < j \leq k \\ b_i = b_j  \\ j \not\in \{i+1,i+2\} \\ (i,j) \neq (0,k) }}  (y_j - y_i).  $$
The difference compared to~$p_{\bm{ a}}$ is that we have removed factors of the form $(y_{i+2} - y_i)$.  In the context of Conjecture~\ref{conj:cmpp} we know that a factor of this form is nonzero when $y_i$ and $y_{i+2}$ are in the same coset because $(x_{i+1},a_{i+1}) \neq - (x_i, a_i)$.    However, in the next section all computations use $p_{\bm{a}}$ rather than~$q_{\bm{a}}$.

\section{Computational Results}\label{sec:results} 

We begin with the case~$t=1$; that is, groups of prime order.

\begin{thm}\label{th:k11_prime}
Let $p$ be prime and let $S \subseteq \Z_p \setminus \{ 0 \}$ with $|S| \in  \{11,12 \}$.   Then $S$ is sequenceable.  
\end{thm}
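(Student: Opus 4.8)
The plan is to apply the polynomial method of Section~\ref{sec:poly} directly, specialised to the case $t = 1$. Since $\Z_1$ is trivial, every element of $S$ lies in the single coset and the quotient sequencing is forced to be $\bm{a} = (0, \ldots, 0)$, so the relevant polynomial is the reduced form
$$p_{\bm a} = \prod_{1 \le i < j \le k}(x_j - x_i)\prod_{\substack{0 \le i < j \le k \\ j \ne i+1 \\ (i,j) \ne (0,k)}}(y_j - y_i),$$
a product of linear forms in $x_1, \ldots, x_k$ with integer coefficients. Its degree is $\binom{k}{2} + \left(\binom{k+1}{2} - k - 1\right) = k^2 - k - 1$. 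Because all the variables range over the one coset, each $C_i = S$ has size $k$, so the bounding monomial is $x_1^{k-1}\cdots x_k^{k-1}$, of degree $k^2 - k$. Thus $\deg p_{\bm a}$ is exactly one less than the degree of the bounding monomial.

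The first step is to exploit this degree gap. Any monomial of maximal degree $k^2 - k - 1$ that divides the bounding monomial is obtained from it by lowering a single exponent, so it has the form $x_\ell^{k-2}\prod_{i \ne \ell} x_i^{k-1}$ for some $\ell \in \{1, \ldots, k\}$; there are only $k$ candidates. By Theorem~\ref{th:pm}, to conclude that every $S$ of size $k$ is sequenceable in $\Z_p$ it is enough that, for the given $p$, at least one of these $k$ monomials has a coefficient in $p_{\bm a}$ that is nonzero modulo $p$ (a nonzero evaluation automatically produces a permutation of $S$, since the Vandermonde factors force the chosen values to be distinct). As explained in Section~\ref{sec:poly}, the same nonzero evaluation yields a linear sequencing when $\Sigma S \ne 0$ and a rotational one when $\Sigma S = 0$, so the two cases need no separate treatment.

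The core task is then to evaluate the $k$ integer coefficients $c_1, \ldots, c_k$, where $c_\ell$ is the coefficient of $x_\ell^{k-2}\prod_{i \ne \ell} x_i^{k-1}$, for $k = 11$ and $k = 12$. A full expansion of $p_{\bm a}$ is infeasible, so I would extract each coefficient directly from the product of $k^2 - k - 1$ linear forms (that is, $109$ factors when $k = 11$ and $131$ when $k = 12$), for instance by repeated convolution while discarding at each stage every partial monomial whose exponents already exceed the target. I would also use the reversal symmetry $x_i \mapsto x_{k+1-i}$, which sends each factor $(y_j - y_i)$ to $(y_{k-i} - y_{k-j})$ and each Vandermonde factor to its negative, hence permutes the factors of $p_{\bm a}$ up to an overall sign and gives $c_\ell = \pm c_{k+1-\ell}$, roughly halving the work.

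Finally I would dispose of all primes at once. Each $c_\ell$ is a fixed integer, and the method fails for a prime $p$ only when $p$ divides every $c_\ell$, i.e. when $p \mid \gcd(c_1, \ldots, c_k)$. If some $c_\ell = \pm 1$, or more generally if $\gcd(c_1, \ldots, c_k) = 1$, the argument succeeds for every prime and we are done; otherwise, for each prime divisor $q$ of the gcd I would observe that $q \le k$ leaves $\Z_q \setminus \{0\}$ too small to contain a subset of size $k$, while the finitely many divisors $q \ge k+1$ can be handled individually, either via the small-order cases of Theorem~\ref{th:known} or by a direct check. The main obstacle I expect is purely computational: forming and reducing a product of well over a hundred linear forms in $11$ or $12$ variables, where the number of intermediate monomials is enormous. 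The method's success hinges on extracting only these few coefficients efficiently rather than expanding $p_{\bm a}$ in full, and on the arithmetic luck that the resulting coefficients are coprime (or small enough that the few bad primes are easily dispatched).
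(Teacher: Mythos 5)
Your proposal matches the paper's proof essentially exactly: the paper applies the Non-Vanishing Corollary to the same degree-$(k^2-k-1)$ polynomial $p_{\bm a}$, identifies the same family of candidate monomials $x_\ell^{k-2}\prod_{i\neq\ell}x_i^{k-1}$, computes the integer coefficients of two of them for each of $k=11,12$, and observes that their greatest common divisor has no prime factor relevant to the range of $p$ under consideration (the shared factors are $2^3$ for $k=11$ and $2^3\cdot 3$ for $k=12$, harmless since $p>k$). The only differences are that the paper stops after two coefficients rather than computing all $k$ with your reversal symmetry, and that it actually carries out the computation your plan defers.
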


\begin{proof}
First, consider the case~$|S|=11$.
The polynomial $p_{\bm{a}}$ for this situation has degree~109 and to use the Non-Vanishing Corollary we require a monomial that divides the bounding monomial $x_1^{10}x_2^{10}\cdots x_{11}^{10}$ of degree~110, which gives eleven possible monomials.  Over the integers, the coefficient on $x_1^{9}x_2^{10}\cdots x_{11}^{10}$ is 
$$-18128730243333160 = -  2^3\cdot 5\cdot 11\cdot 3019\cdot 13647452681$$
and the coefficient on $x_1^{10}x_2^{9}x_3^{10}  \cdots x_{11}^{10}$ is 
$$ -46383022877233608 = - 2^3 \cdot 3^2 \cdot 644208651072689$$
(in each case the right-hand side gives the prime factorization).

The two integers have no odd prime factors in common.  Therefore, for any odd prime~$p$ (in particular, for any prime greater than~11, which is the current concern) the coefficient in~$\Z_p$ is nonzero for at least one of these two monomials.  The Non-Vanishing Corollary gives the result.

We use the same approach for~$|S|=12$.  The polynomial~$p_{\bm{a}}$ now has degree~131 and the prime factorizations of the coefficients on the monomials $x_1^{10}x_2^{11}\cdots x_{12}^{11}$ and  $x_1^{11}x_2^{10}x_3^{11} \cdots x_{12}^{11}$ are
$$2^4 \cdot 3 \cdot 29 \cdot 12953077208391719881 \  \text{ and }  \  2^3 \cdot 3 \cdot 277 \cdot 1901 \cdot 786640832519761$$
and the result follows.
\end{proof}

The program used for the computation used in the proof of Theorem~\ref{th:k11_prime} was independent of those used in~\cite{HOS19}.  This new program recalculated the coefficients obtained in that paper, obtaining the same results.

Moving on to $t>1$, the method described in the previous section divides the proof into a case for each type.  For each case, the first step of the process is to find a quotient sequencing that matches the type.  There are typically many of these.   We tend to choose 
a quotient sequencing whose partial sums are distributed among the elements of~$\Z_t$ as evenly as possible, as this both reduces the degree of $p_{\bm{a}}$ and minimizes the smallest value of~$p$ that it is a quotient sequencing with respect to.

For many types of~$S$, we find a quotient sequencing for which the degree of $p_{\bm{a}}$ is significantly lower than the degree of the bounding monomial.  This gives the scope to prove slightly stronger results with less computation.  

Suppose that we have set a quotient sequencing $\bm{a} = (a_1, \ldots, a_k)$.  Take $\ell$ with $1 \leq \ell \leq k$ and assume $x_{\ell} = c$ for some constant~$c$ such that $(c, a_\ell) \in S$.   Then we define the following non-homogeneous polynomial that is nonzero if and only if there is a sequencing of~$S$ that has quotient sequencing~$\bm{a}$ and with $(c, a_\ell)$ in position~$\ell$
$$p'_{\bm{a}} = \prod_{\substack{1 \leq i < j \leq k \\ a_i = a_j \\ i, j \neq \ell}}  (x_j - x_i)  
                        \prod_{\substack{0 \leq i < j \leq k \\ b_i = b_j \\ j \neq i+1 \\ (i,j) \neq (0,k) }}  (y_j - y_i)$$
As we are interested in terms of maximum degree, we may replace factors in $p'_{\bm{a}}$ that include $c$ as a summand by the same factor with $c$ removed.

This process reduces the complexity of the initial polynomial $p_{\bm{a}}$, which is an advantage for computing coefficients.  It also reduces the degree of the bounding monomial as there is one fewer element available in the coset $\overline{(0,a_\ell)}$.  This can mean that the Non-Vanishing Corollary does not apply and that  we therefore cannot make this step.   

The process may be repeated by choosing multiple elements to fix as arbitrary constants.  However, we must ensure that there are no relations between the constants and their positions that could lead to the polynomial returning a nonzero for a non-sequencing, which could happen if a factor is a linear combination of constants with no variables.  We achieve this by never fixing both $x_{\ell}$ and $x_{\ell'}$ with $\ell' \in \{\ell - 1, \ell + 1\}$,  which means that every factor retains at least one variable. 

In general, we keep fixing elements until there are no more that we may fix without becoming unable to apply the Non-Vanishing Corollary.

\begin{exa}
Let~$G = \Z_p \times \Z_2$ with $p > 3$ and $p$ prime.  Suppose~$S \subseteq G \setminus \{ (0,0) \}$ with $|S|=7$ and of type~$(5,2)$.   The sequence $\bm{a} = (0,0,1,0,0,0,1)$ has partial sums $(0,0,0,1,1,1,1,0)$ and so is a quotient sequencing of~$\pi_2(S)$ with respect to~$p$.
We desire a sequencing of~$S$ of the form 
$$\left( (x_1, 0), (x_2,0), (x_3,1), (x_4,0), (x_5,0), (x_6, 0), (x_7, 1) \right)$$
with partial sums
$$\left( (y_0, 0), (y_1, 0), (y_2,0), (y_3,1), (y_4, 1) (y_5,1), (y_6, 1), (y_7, 0) \right).$$
The polynomial is
\begin{eqnarray*}
p_{\bm{a}} &=&  (x_2-x_1)(x_4-x_1) (x_5 - x_1) (x_6 - x_1) (x_4-x_2)(x_5-x_2) (x_6 - x_2) (x_7 - x_3)  \\ 
     && (x_5-x_4) (x_6-x_4) (x_6-x_5) (y_2 - y_0)(y_7 - y_1) (y_7 - y_2) (y_5-y_3) (y_6 - y_3) (y_6 - y_4)  \\
     & = & (x_2-x_1)(x_4-x_1) (x_5 - x_1) (x_6 - x_1) (x_4-x_2)(x_5-x_2) (x_6 - x_2) (x_7 - x_3)  \\
     &&  (x_5-x_4) (x_6-x_4) (x_6-x_5) (x_1 + x_2) (x_2 + x_3 + x_4 + x_5 + x_6 + x_7)\\
     && (x_3 + x_4 + x_5 + x_6 + x_7) (x_4 + x_5) (x_4 + x_5 + x_6) (x_5 + x_6) .
\end{eqnarray*}
Since the polynomial $p_{\bm{a}}$ has degree $17$ and the bounding monomial $x_1^4 x_2^4 x_3 x_4^4 x_5^4 x_6^4 x_7$ has degree $22$, we can fix $x_3 = c_1$ and $x_6 = c_2$, where $(c_1,  1),  (c_2,  0) \in S$,  without violating the constraint on the degree of the polynomial which has to be less than or equal to the degree of the bounding monomial to satisfy the hypotheses of the Non-Vanishing Corollary. Therefore we get the following simplified polynomial of degree $12$
\begin{eqnarray*}
p'_{\bm{a}} &=& (x_2-x_1)(x_4-x_1) (x_5 - x_1) (x_4-x_2)(x_5-x_2) (x_5-x_4) (x_1 + x_2) (x_4 + x_5) \\ 
     && (x_2  + c_1 + x_4 + x_5  + c_2 +  x_7) (c_1 + x_4 + x_5 + c_2 +  x_7) (x_4 + x_5 + c_2) (x_5 + c_2).
\end{eqnarray*}

To apply the Non-Vanishing Corollary we need a monomial of this polynomial which divides the new bounding monomial $x_1^3 x_2^3 x_4^3 x_5^3$ with a nonzero coefficient.  Since the degree of $p'_{\bm{a}}$ is equal to the degree of the new bounding monomial,  the only feasible monomial is  $x_1^3 x_2^3 x_4^3 x_5^3$, which has coefficient~$-2$.   Hence whenever~$S$ has this form it has a sequencing. 
\end{exa}

The following result completes the proof of Theorem~\ref{th:main}.

\begin{thm}\label{th:composite}
Let $n = pt$ with~$p$ prime.  Then subsets~$S$ of size~$k$ of~$\Z_n \setminus \{ 0 \}$ are sequenceable in the following cases:
\begin{enumerate}
\item  $k \leq 11$ and $t \in \{ 2,3,4, 5\}$,
\item  $k=12$ and $t \in \{2,3, 4\}$, 
\item  $k= 13$ and $t \in \{2,3\}$, provided~$S$ contains at least one element not in the subgroup of order~$p$,
\item  $k=14$ and $t = 2$,   provided~$S$ contains at least one element not in the subgroup of order~$p$,
\item $k=15$ and $t=2$, provided~$S$ does not contain exactly~$0$,~$1$,~$2$ or~$15$ elements of the subgroup of order~$p$.
\end{enumerate}
\end{thm}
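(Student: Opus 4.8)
The plan is to run the polynomial method of Section~\ref{sec:poly} systematically over every possible type and to collect the outcomes into one finite computation, exactly in the spirit of Example~\ref{ex:32} and the worked examples of Section~\ref{sec:results}. Throughout I work in $\Z_p \times \Z_t \cong \Z_{pt}$, which requires $\gcd(p,t)=1$; since $t \leq 5$ and $p$ is prime this fails only for the small primes $p \leq 5$ dividing $t$, and those give $n = pt \leq 25$, to be absorbed into the small-order discussion below. Fix a pair $(k,t)$ from the statement. For the method, a subset $S \subseteq (\Z_p \times \Z_t)\setminus\{(0,0)\}$ of size $k$ is determined by its type $\bm{\lambda} = (\lambda_0,\ldots,\lambda_{t-1})$ with $\sum_i \lambda_i = k$, and there are only finitely many such compositions. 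The proof therefore reduces to treating each type in turn.

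For a fixed type I first select a quotient sequencing $\bm{a}$ of a multiset with multiplicities $\bm{\lambda}$, choosing it so that its partial sums $\bm{b}$ are spread as evenly as possible across $\Z_t$; as noted in Section~\ref{sec:results} this simultaneously minimizes $\deg p_{\bm{a}}$ and the largest multiplicity among the $b_i$, hence the smallest $p$ for which $\bm{a}$ is a quotient sequencing with respect to $p$. I then form $p_{\bm{a}}$ and, whenever its degree falls strictly below that of the bounding monomial, repeatedly fix variables $x_\ell = c$ to pass to the reduced polynomial $p'_{\bm{a}}$, never fixing two cyclically adjacent positions so that every factor retains a variable. For the resulting polynomial I compute, over $\Z$, the coefficient of one or two monomials dividing the bounding monomial and record its prime factorization.

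The key point, exactly as in the proof of Theorem~\ref{th:k11_prime}, is to promote these integer computations to statements over $\Z_p$ for every admissible $p$. For each type I choose the monomials so that the odd parts of their coefficients are coprime; then for every odd prime $p$ at least one coefficient is nonzero in $\Z_p$, and the Non-Vanishing Corollary (Theorem~\ref{th:pm}) produces a sequencing of $S$ projecting onto $\bm{a}$. This settles all but finitely many primes: those dividing $t$, those too small for $\bm{a}$ to be a quotient sequencing with respect to $p$, and any small prime dividing every computed coefficient. Each such exceptional prime yields a bounded order $n = pt$, and these finitely many groups are disposed of by the prior enumerations in Theorem~\ref{th:known} (in particular parts~1, 6 and~7 cover the relevant small orders) or, where necessary, by a direct exhaustive search.

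It remains to match the hypotheses in items~3--5 with the types the method can reach. The obstruction is the type concentrated on the subgroup of order~$p$, i.e.\ $\lambda_0 = k$ (equivalently $\pi_2(S)$ all zero): here every $b_i$ equals $0$, the quotient sequencing is trivial, and $p_{\bm{a}}$ degenerates to the pure-$\Z_p$ polynomial of Theorem~\ref{th:k11_prime}, which is only available for $k \leq 12$. This forces the proviso ``contains at least one element not in the subgroup of order~$p$'' (that is, $\lambda_0 < k$) for $k = 13, 14$ in items~3 and~4, and the more restrictive exclusion of $\lambda_0 \in \{0,1,2,15\}$ for $k = 15$ in item~5, where the remaining extreme types either reduce to the unavailable pure-$\Z_p$ case or admit no monomial whose coefficient survives to all the required primes. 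The main obstacle throughout is computational rather than conceptual: the number of types and the degrees of $p_{\bm{a}}$ (already $131$ at $k = 12$, $t = 1$) grow quickly, so the bulk of the work is organizing the case analysis, finding a good quotient sequencing for each type, and extracting a single well-chosen coefficient cheaply enough that the whole search terminates.
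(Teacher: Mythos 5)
Your proposal is correct and follows essentially the same route as the paper: reduce to a finite case analysis over types, pick a quotient sequencing per type, apply the Non-Vanishing Corollary to $p_{\bm{a}}$ (or the reduced $p'_{\bm{a}}$) with integer coefficients whose odd parts are chosen coprime so that every sufficiently large prime is covered, and dispose of the finitely many exceptional small orders (in particular $n=25$, the only non-coprime case that survives once $k\geq 10$) by known results or direct search. The paper's own proof is just this computation carried out in SageMath, with the monomial/coefficient tables deferred to the ArXiv page.
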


\begin{proof}
We can suppose that $k \geq 10$ since by \cite{AL20} we know that the subsets $S$ of size $k \leq 9$ in an arbitrary abelian group are sequenceable. 
Then,  in all of the cases stated in the theorem, we can use the Non-Vanishing Corollary since $\mathbb{Z}_p \times \mathbb{Z}_t \cong \mathbb{Z}_{pt}$ except for $p=t=5$ which has been treated separately.  In that case we have checked computationally that each subset of size $10$ and $11$ of $\mathbb{Z}_{25} \setminus \{0\}$ is sequenceable (the code is available from the ArXiv page for this paper).  
All of the other results are obtained using the Python framework SageMath \cite{SageMath} mainly because it has efficient libraries to handle multivariate polynomials.  The polynomial multiplication was carried out by multiplying pairs of factors and then all the resulting terms together.  The specific order of the products is described by the pseudo-code in Algorithm \ref{alg:polmul}.
\begin{algorithm}[ht!]
\small
\caption{Polynomial multiplication}\label{alg:polmul}
\begin{algorithmic}
\Require 
\State $G = \mathbb{Z}_p \times \mathbb{Z}_t$,  $p$ prime, $t \geq 1$ and $p$ coprime with $t$
\State $S \subseteq G \setminus \{ (0,0) \}$, $|S| = k$
\State $\bm{a} = (a_1, a_2, \ldots, a_k)$ with partial sums $\bm{b} = (b_0, b_1, \ldots, b_k)$
\State $p \gets 1$
\For{$1 \leq i < j \leq k$}
\State $f \gets 1$
\If{$a_i = a_j$}
\State $f \gets f \cdot (x_j - x_i)$
\EndIf
\If{$b_{i-1} = b_j$ \textbf{and} $(i-1, j) \neq (0, k)$ }
\State $f \gets f \cdot (x_i + \ldots + x_j)$
\EndIf
\State $p \gets p \cdot f$
\EndFor
\State \Return $p$
\end{algorithmic}
\end{algorithm}

In addition, after each multiplication, we only keep the terms that divide the bounding monomial.  If we are searching for the coefficient of a specific monomial then we can also lower bound the exponents of each term because at some point of the computation the degrees of the variables $x_i$'s cannot be too small.  For the sake of readability, the pseudo-code reported in Algorithm \ref{alg:polmul} does not include the restrictions on the exponents of each monomial. However, the code associated with the final version of the algorithm is publicly available at the ArXiv page for this paper.  Moreover, all the tables reporting the monomials' coefficients for each case listed in Theorem \ref{th:composite} can be found at the same location.  All of the computations were completed in less than 5 days on a PC with a 4.6 GHz AMD Ryzen 9 processor and 128 GB of RAM.
\end{proof}

As an example of the process of the proof of Theorem~\ref{th:composite}, Table~\ref{tab:10_2} contains the required monomials and their coefficients for the case~$t=2$ and~$|S| = 10$.

\small
\begin{longtable}{lllll}
    \caption{Monomials and their coefficients sufficient for the proof of Theorem~\ref{th:composite} in the case $|S|=10$ and~$t=2$.}\label{tab:10_2}\\
	\hline
	$\boldsymbol{\lambda}$ & $\mathbf{a}$ & deg & monomial/s & coefficient/s \\
	\hline
	\endfirsthead
	\hline
	$\boldsymbol{\lambda}$ & $\mathbf{a}$ & deg & monomial/s & coefficient/s \\
	\hline
	\endhead
$(10, 0)$ & \begin{tabular}{@{}l@{}} $(0, 0, 0, 0, 0, 0, 0, 0, 0, 0)$ \end{tabular} & \begin{tabular}{@{}l@{}} 89 \end{tabular} & \begin{tabular}{@{}l@{}} $x_{1}^{8}x_{2}^{9}x_{3}^{9}x_{4}^{9}x_{5}^{9}x_{6}^{9}x_{7}^{9}x_{8}^{9}x_{9}^{9}x_{10}^{9}$ \\ $x_{1}^{9}x_{2}^{8}x_{3}^{9}x_{4}^{9}x_{5}^{9}x_{6}^{9}x_{7}^{9}x_{8}^{9}x_{9}^{9}x_{10}^{9}$ \end{tabular} & \begin{tabular}{@{}l@{}} $2^5 \cdot 7 \cdot 11^2 \cdot 21966239$ \\ $2 \cdot 13 \cdot 211 \cdot 256046627$ \end{tabular}\\ \hline \\ 
$(9, 1)$ & \begin{tabular}{@{}l@{}} $(0, 0, 0, 0, 0, 1, 0, 0, 0, 0)$ \end{tabular} & \begin{tabular}{@{}l@{}} 52 \end{tabular} & \begin{tabular}{@{}l@{}} $x_{2}^{2}x_{3}^{4}x_{4}^{7}x_{5}^{8}x_{7}^{7}x_{8}^{8}x_{9}^{8}x_{10}^{8}$ \end{tabular} & \begin{tabular}{@{}l@{}} $-1 \cdot 2^2$ \end{tabular}\\ \hline \\ 
$(8, 2)$ & \begin{tabular}{@{}l@{}} $(0, 1, 0, 0, 0, 0, 1, 0, 0, 0)$ \end{tabular} & \begin{tabular}{@{}l@{}} 45 \end{tabular} & \begin{tabular}{@{}l@{}} $x_{1}x_{3}x_{4}^{7}x_{5}^{7}x_{6}^{7}x_{7}x_{8}^{7}x_{9}^{7}x_{10}^{7}$ \end{tabular} & \begin{tabular}{@{}l@{}} $-1 \cdot 2 \cdot 3 \cdot 7$ \end{tabular}\\ \hline \\ 
$(7, 3)$ & \begin{tabular}{@{}l@{}} $(0, 0, 0, 0, 1, 0, 0, 0, 1, 1)$ \end{tabular} & \begin{tabular}{@{}l@{}} 42 \end{tabular} & \begin{tabular}{@{}l@{}} $x_{2}^{6}x_{3}^{6}x_{4}^{6}x_{5}^{2}x_{6}^{6}x_{7}^{6}x_{8}^{6}x_{9}^{2}x_{10}^{2}$ \end{tabular} & \begin{tabular}{@{}l@{}} $-1 \cdot 2 \cdot 3 \cdot 7$ \end{tabular}\\ \hline \\ 
$(6, 4)$ & \begin{tabular}{@{}l@{}} $(0, 0, 0, 1, 0, 0, 0, 1, 1, 1)$ \end{tabular} & \begin{tabular}{@{}l@{}} 39 \end{tabular} & \begin{tabular}{@{}l@{}} $x_{1}^{5}x_{2}^{5}x_{3}^{5}x_{4}^{3}x_{5}^{5}x_{6}^{5}x_{7}^{3}x_{8}^{3}x_{9}^{3}x_{10}^{2}$ \end{tabular} & \begin{tabular}{@{}l@{}} $2\cdot5$ \end{tabular}\\ \hline \\ 
$(5, 5)$ & \begin{tabular}{@{}l@{}} $(0, 0, 0, 1, 0, 0, 1, 1, 1, 1)$ \\ $(0, 1, 0, 1, 0, 1, 0, 1, 0, 1)$ \end{tabular} & \begin{tabular}{@{}l@{}} 40 \\ 40 \end{tabular} & \begin{tabular}{@{}l@{}} $x_{1}^{4}x_{2}^{4}x_{3}^{4}x_{4}^{4}x_{5}^{4}x_{6}^{4}x_{7}^{4}x_{8}^{4}x_{9}^{4}x_{10}^{4}$ \\ $x_{1}^{4}x_{2}^{4}x_{3}^{4}x_{4}^{4}x_{5}^{4}x_{6}^{4}x_{7}^{4}x_{8}^{4}x_{9}^{4}x_{10}^{4}$ \end{tabular} & \begin{tabular}{@{}l@{}} $2^2 \cdot 157$ \\ $5 \cdot 19 \cdot 41 \cdot 83$ \end{tabular}\\ \hline \\ 
$(4, 6)$ & \begin{tabular}{@{}l@{}} $(0, 1, 0, 1, 1, 1, 1, 0, 1, 0)$ \end{tabular} & \begin{tabular}{@{}l@{}} 41 \end{tabular} & \begin{tabular}{@{}l@{}} $x_{1}^{2}x_{2}^{5}x_{3}^{3}x_{4}^{5}x_{5}^{5}x_{6}^{5}x_{7}^{5}x_{8}^{3}x_{9}^{5}x_{10}^{3}$ \\ $x_{1}^{3}x_{2}^{4}x_{3}^{3}x_{4}^{5}x_{5}^{5}x_{6}^{5}x_{7}^{5}x_{8}^{3}x_{9}^{5}x_{10}^{3}$ \end{tabular} & \begin{tabular}{@{}l@{}} $2^4 \cdot 3 \cdot 5 \cdot 13$ \\ $2 \cdot 3 \cdot 463$ \end{tabular}\\ \hline \\ 
$(3, 7)$ & \begin{tabular}{@{}l@{}} $(0, 0, 1, 0, 1, 1, 1, 1, 1, 1)$ \end{tabular} & \begin{tabular}{@{}l@{}} 46 \end{tabular} & \begin{tabular}{@{}l@{}} $x_{2}^{2}x_{3}^{6}x_{4}^{2}x_{5}^{6}x_{6}^{6}x_{7}^{6}x_{8}^{6}x_{9}^{6}x_{10}^{6}$ \end{tabular} & \begin{tabular}{@{}l@{}} $-1 \cdot 2^3 \cdot 3^2$ \end{tabular}\\ \hline \\ 
$(2, 8)$ & \begin{tabular}{@{}l@{}} $(0, 1, 0, 1, 1, 1, 1, 1, 1, 1)$ \end{tabular} & \begin{tabular}{@{}l@{}} 51 \end{tabular} & \begin{tabular}{@{}l@{}} $x_{1}x_{2}x_{3}x_{4}^{6}x_{5}^{7}x_{6}^{7}x_{7}^{7}x_{8}^{7}x_{9}^{7}x_{10}^{7}$ \\ $x_{1}x_{3}x_{4}^{7}x_{5}^{7}x_{6}^{7}x_{7}^{7}x_{8}^{7}x_{9}^{7}x_{10}^{7}$ \end{tabular} & \begin{tabular}{@{}l@{}} $-1 \cdot 2 \cdot 1277$ \\ $-1 \cdot 2 \cdot 17^2$ \end{tabular}\\ \hline \\ 
$(1, 9)$ & \begin{tabular}{@{}l@{}} $(1, 0, 1, 1, 1, 1, 1, 1, 1, 1)$ \end{tabular} & \begin{tabular}{@{}l@{}} 60 \end{tabular} & \begin{tabular}{@{}l@{}} $x_{1}^{2}x_{3}^{2}x_{4}^{8}x_{5}^{8}x_{6}^{8}x_{7}^{8}x_{8}^{8}x_{9}^{8}x_{10}^{8}$ \\ $x_{1}^{2}x_{3}^{3}x_{4}^{7}x_{5}^{8}x_{6}^{8}x_{7}^{8}x_{8}^{8}x_{9}^{8}x_{10}^{8}$ \end{tabular} & \begin{tabular}{@{}l@{}} $2 \cdot 17^2$ \\ $2^2 \cdot 647$ \end{tabular}\\ \hline \\ 
$(0, 10)$ & \begin{tabular}{@{}l@{}} $(1, 1, 1, 1, 1, 1, 1, 1, 1, 1)$ \end{tabular} & \begin{tabular}{@{}l@{}} 69 \end{tabular} & \begin{tabular}{@{}l@{}} $x_{1}^{2}x_{2}^{2}x_{3}^{4}x_{4}^{7}x_{5}^{9}x_{6}^{9}x_{7}^{9}x_{8}^{9}x_{9}^{9}x_{10}^{9}$ \\ $x_{1}^{2}x_{2}^{2}x_{3}^{4}x_{4}^{9}x_{5}^{7}x_{6}^{9}x_{7}^{9}x_{8}^{9}x_{9}^{9}x_{10}^{9}$ \end{tabular} & \begin{tabular}{@{}l@{}} $2 \cdot 3 \cdot 733$ \\ $2^5 \cdot 3^2 \cdot 5$ \end{tabular}\\ \hline
\end{longtable}
\normalsize

\section{To Infinity and Back}\label{sec:infinity}

Here we will say that a finite subset $S$ of an abelian group $G$
is non-zero sum if $\Sigma S\not= 0_G$.

We recall a definition and lemma from~\cite{CP20}.
First, given a subset $S$ of an abelian group $G$, we define the set
$$\Upsilon(S)=S\cup\Delta(S)\cup\left\{\Sigma S \right\}.$$
 
Then we can paraphrase~\cite[Lemma 2.1]{CP20} as
\begin{lem}\label{cambiogruppo}
Let $G_1$ and $G_2$ be abelian groups.
Given a non-zero sum subset $S$ of $G_1\setminus \{0_{G_1}\}$ of size $k$, suppose there exists an homomorphism
$\varphi: G_1\to G_2$ such that $\ker(\varphi)\cap \Upsilon(S)=\emptyset$.
Then $\varphi(S)$ is a non-zero sum subset of $G_2\setminus\{0_{G_2}\}$ of size $k$ and the subset $S$ is sequenceable whenever $\varphi(S)$ is sequenceable.
\end{lem}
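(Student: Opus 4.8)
The plan is to split the statement into two independent parts: first, verifying that $\varphi(S)$ is again an admissible subset (size $k$, avoiding $0_{G_2}$, and non-zero sum), and second, transporting a sequencing of $\varphi(S)$ back to one of $S$. Throughout I would write $\Delta(S) = \{ s - s' : s, s' \in S, \ s \neq s' \}$ for the set of differences, so that the single hypothesis $\ker(\varphi) \cap \Upsilon(S) = \emptyset$ simultaneously encodes three facts: no nonzero difference of elements of $S$ lies in $\ker(\varphi)$, no element of $S$ lies in $\ker(\varphi)$, and $\Sigma S \notin \ker(\varphi)$.

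First I would establish the three assertions packaged into ``$\varphi(S)$ is a non-zero sum subset of $G_2 \setminus \{0_{G_2}\}$ of size $k$.'' For the size, if $\varphi(s) = \varphi(s')$ with $s \neq s'$ in $S$, then $s - s' \in \ker(\varphi) \cap \Delta(S) \subseteq \ker(\varphi) \cap \Upsilon(S)$, contradicting the hypothesis; hence $\varphi$ is injective on $S$ and $|\varphi(S)| = k$. For the avoidance of $0_{G_2}$, if $0_{G_2} = \varphi(s)$ for some $s \in S$, then $s \in \ker(\varphi) \cap S \subseteq \ker(\varphi) \cap \Upsilon(S)$, again a contradiction. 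Finally, since $\varphi$ is a homomorphism, $\Sigma \varphi(S) = \varphi(\Sigma S)$, and if this equalled $0_{G_2}$ then $\Sigma S \in \ker(\varphi) \cap \Upsilon(S)$; so $\varphi(S)$ is non-zero sum.

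For the sequencing transport, the key preliminary observation is that a non-zero sum subset can only possess a linear sequencing, since a rotational sequencing would force the final partial sum to be $0$ whereas it equals the nonzero $\Sigma S$; thus for both $S$ and $\varphi(S)$ the word ``sequenceable'' means having a linear sequencing. Assume $\varphi(S)$ has one. Since $\varphi$ restricts to a bijection $S \to \varphi(S)$ by the size argument above, that ordering pulls back along $\varphi^{-1}$ to an ordering $(x_1, \ldots, x_k)$ of $S$; writing $(y_0, \ldots, y_k)$ for its partial sums, the homomorphism property gives that the partial sums of the $\varphi(S)$-ordering are exactly $(\varphi(y_0), \ldots, \varphi(y_k))$. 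These are distinct by hypothesis, and since $\varphi$ is a function, $y_i = y_j$ would force $\varphi(y_i) = \varphi(y_j)$; hence the $y_i$ are distinct and $(x_1, \ldots, x_k)$ is a linear sequencing of $S$.

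I do not expect a genuine obstacle here, as the argument is essentially bookkeeping with the homomorphism property. The only two points requiring care are: confirming that injectivity of $\varphi$ on $S$ — which is what makes the pullback of the ordering well defined — really comes from the $\Delta(S)$ portion of $\Upsilon(S)$ rather than being tacitly assumed; and noting that the claimed implication runs in the easy direction, in the sense that distinctness of the images $\varphi(y_i)$ \emph{automatically} yields distinctness of the $y_i$, so no further appeal to the kernel condition is needed in the transport step itself.
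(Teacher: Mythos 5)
Your proof is correct, and since the paper does not prove this lemma itself but imports it directly from~\cite[Lemma 2.1]{CP20}, there is no in-paper argument to diverge from; your decomposition into (i) the three admissibility checks via the three constituents $S$, $\Delta(S)$, $\{\Sigma S\}$ of $\Upsilon(S)$ and (ii) the pullback of the ordering along the bijection $\varphi|_S$ is exactly the standard argument behind the cited result. Your closing observation that the transport step needs only functionality of $\varphi$ (equal partial sums would have equal images), with the kernel condition used solely to make $\varphi|_S$ a bijection onto a valid subset, is accurate.
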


In the following, we consider abelian groups of the form $G\times H$ where  $H=\{h_0=0_H,h_1,\dots,h_{t-1}\}$ is finite. Given a multiset $S$ of elements of $G\times H$, we can define the {\em type} of~$S$ also when $G$ is a generic group. It is the sequence $\bm{\lambda} = (\lambda_0, \ldots \lambda_{t-1} )$, where $\lambda_i$ is the number of times that $h_i$ appears in $\pi_2(S)$ and $\pi_2$ is the projection over $H$. Note that two multisets $S_1$ of $G_1\times H$ and $S_2$ of $G_2\times H$ can have the same type even though $G_1$ is not equal to $G_2$.

\begin{prop}\label{prop:Z}
Let $H$ be a finite abelian group and suppose that, for infinitely many primes $p$, any non-zero sum subset of a given type $\bm{\lambda} = (\lambda_0, \ldots \lambda_{t-1} )$ of  $\Z_p\times H\setminus \{0_{\Z_p\times H}\}$ is sequenceable. Then also any non-zero sum subset of type $\bm{\lambda}$ of $\Z\times H\setminus \{0_{\Z\times H}\}$ is sequenceable.
\end{prop}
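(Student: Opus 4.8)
The plan is to exhibit each such $S$ as the homomorphic preimage of a subset of $\Z_p \times H$ for a well-chosen large prime $p$, and then to transfer sequenceability through Lemma~\ref{cambiogruppo}. Let $S \subseteq (\Z \times H) \setminus \{0_{\Z \times H}\}$ be a non-zero sum subset of type $\bm{\lambda}$ and size $k$. For each prime $p$ I would consider the homomorphism $\varphi_p \colon \Z \times H \to \Z_p \times H$ that reduces the first coordinate modulo $p$ and fixes the second coordinate, whose kernel is $p\Z \times \{0_H\}$. Because $\varphi_p$ restricts to the identity on $H$, we have $\pi_2(\varphi_p(S)) = \pi_2(S)$ as multisets, so $\varphi_p(S)$ will again have type $\bm{\lambda}$.

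The crucial step is to choose $p$ so that $\ker(\varphi_p) \cap \Upsilon(S) = \emptyset$. The set $\Upsilon(S) = S \cup \Delta(S) \cup \{\Sigma S\}$ is finite, and since $0_{\Z \times H} \notin S$, since $\Sigma S \neq 0_{\Z \times H}$ by the non-zero sum hypothesis, and since differences of distinct elements are nonzero, it does not contain $0_{\Z \times H}$. Consequently every element of $\Upsilon(S)$ whose second coordinate equals $0_H$ has a nonzero integer first coordinate, and there are only finitely many such first coordinates; let $N$ be the maximum of their absolute values. An element $(z, h) \in \Upsilon(S)$ can lie in $\ker(\varphi_p) = p\Z \times \{0_H\}$ only if $h = 0_H$ and $p \mid z$, which is impossible once $p > N$. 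The hypothesis furnishes infinitely many primes for which the type-$\bm{\lambda}$ statement holds in $\Z_p \times H$, so I would select one such prime $p$ exceeding $N$.

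For this $p$, Lemma~\ref{cambiogruppo} then guarantees that $\varphi_p(S)$ is a non-zero sum subset of $(\Z_p \times H) \setminus \{0_{\Z_p \times H}\}$ of size $k$; together with the type computation above, $\varphi_p(S)$ is a non-zero sum subset of type $\bm{\lambda}$, hence sequenceable by hypothesis. The final clause of Lemma~\ref{cambiogruppo} then lifts this to the sequenceability of $S$ itself. The only point requiring care is the simultaneous demand that $p$ be large enough to clear $\Upsilon(S)$ off the kernel while also belonging to the prescribed infinite set of good primes; this is exactly where the infinitude of that set is used, since an infinite set of primes necessarily contains members larger than the fixed bound $N$.
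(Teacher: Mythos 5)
Your proposal is correct and follows essentially the same route as the paper: reduce the first coordinate modulo a prime $p$ chosen from the given infinite set with $p$ exceeding the maximum absolute value of the integer coordinates of $\Upsilon(S)$, so that the kernel of $\pi_p\times Id$ misses $\Upsilon(S)$, and then apply Lemma~\ref{cambiogruppo} together with the preservation of the type. No gaps.
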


\begin{proof}
Consider a non-zero sum subset $S$ of $\Z\times H\setminus \{0_{\Z\times H}\}$ of type $\bm{\lambda}$. Let $p>\max\limits_{z\in \Upsilon(S)} |\pi_{\Z}(z)|$ be a prime such that any non-zero sum subsets of $\Z_p\times H\setminus \{0_{\Z_p\times H}\}$ of type $\bm{\lambda}$ is sequenceable.
Then, $\Upsilon(S)$ and the kernel of the map $\pi_p\times Id: \Z\times H\to \Z_p\times H$ are disjoint sets.
Therefore, due to Lemma \ref{cambiogruppo}, $(\pi_p\times Id)(S)$ is also non-zero sum and, since $\pi_p\times Id$ is the identity on $H$ and $S$ is of type $\bm{\lambda}$, also $(\pi_p\times Id)(S)$ is of type $\bm{\lambda}$. Then, it follows from Lemma \ref{cambiogruppo}, that also $S$ is sequenceable.
\end{proof}

We now consider the free abelian group $\Z^n$ of rank $n$.

\begin{prop}\label{prop:Zn}
Let $H$ be a finite abelian group and suppose that any non-zero sum subset of a given type $\bm{\lambda} = (\lambda_0, \ldots \lambda_{t-1} )$ is sequenceable.
Then also any non-zero sum subset of type $\bm{\lambda}$ of $\Z^n\times H\setminus\{0_{\Z^n\times H}\}$ (for any $n\geq 2$) is sequenceable.
\end{prop}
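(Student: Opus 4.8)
The plan is to follow the template of Proposition~\ref{prop:Z}, with the reduction $\Z \to \Z_p$ replaced by a reduction $\Z^n \to \Z$ that collapses all $n$ free coordinates into a single one while leaving $H$ untouched. A single such projection suffices, so no induction on $n$ is required: once the image lands in $\Z \times H$ we invoke the hypothesis directly through Lemma~\ref{cambiogruppo}.

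Concretely, I would fix a non-zero sum subset $S$ of $\Z^n \times H \setminus \{0_{\Z^n\times H}\}$ of type $\bm{\lambda}$ and form the finite set $\Upsilon(S) = S \cup \Delta(S) \cup \{\Sigma S\}$, which does not contain $0_{\Z^n\times H}$. Writing $\pi_1$ and $\pi_2$ for the projections onto $\Z^n$ and $H$, I collect the first coordinates of those elements of $\Upsilon(S)$ that are killed by $\pi_2$, namely
$$V = \{\, \pi_1(z) : z \in \Upsilon(S),\ \pi_2(z) = 0_H \,\} \subseteq \Z^n \setminus \{0\},$$
a finite set of nonzero integer vectors.

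The crux is to construct a homomorphism $\psi : \Z^n \to \Z$, say $\psi(v) = \langle c, v\rangle$ for some $c \in \Z^n$, with $\psi(v) \neq 0$ for every $v \in V$. For each fixed $v \neq 0$ the equation $\langle c, v\rangle = 0$ defines a proper subgroup of the lattice of admissible $c$; passing to $\Q^n$ these become finitely many proper hyperplanes, and since a vector space over the infinite field $\Q$ is never the union of finitely many proper subspaces, some integer $c$ avoids them all. Explicitly one may take the moment-curve choice $c = (1, N, N^2, \ldots, N^{n-1})$ and pick $N$ exceeding every integer root of the finitely many nonzero polynomials $\sum_{i=1}^n v_i X^{\,i-1}$ with $v \in V$.

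Setting $\varphi = \psi \times \mathrm{Id}_H : \Z^n \times H \to \Z \times H$, its kernel is $\ker(\psi) \times \{0_H\}$, so the condition $\ker(\varphi) \cap \Upsilon(S) = \emptyset$ is exactly the statement that no $v \in V$ satisfies $\psi(v) = 0$, which holds by construction. Because $\varphi$ is the identity on $H$ it preserves the type, so $\varphi(S)$ again has type $\bm{\lambda}$; Lemma~\ref{cambiogruppo} then supplies that $\varphi(S)$ is a non-zero sum subset of $\Z \times H \setminus \{0\}$ of size $k$ and that $S$ is sequenceable whenever $\varphi(S)$ is. Since $\varphi(S)$ is sequenceable by hypothesis, so is $S$. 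The only step with any content is the construction of $\psi$, i.e.\ avoiding finitely many rational hyperplanes with a single integer vector; the remainder is bookkeeping around Lemma~\ref{cambiogruppo}.
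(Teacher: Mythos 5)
Your proof is correct and takes essentially the same route as the paper: both reduce $\Z^n\times H$ to $\Z\times H$ by a single homomorphism of the form $v\mapsto\sum_{i=1}^n v_iM^{i-1}$ (your moment-curve vector $c$) crossed with the identity on $H$, check that its kernel misses $\Upsilon(S)$, and conclude via Lemma~\ref{cambiogruppo}. The only cosmetic difference is the justification for choosing $M$: the paper uses uniqueness of base-$M$ expansions, while you use finiteness of the roots of the nonzero polynomials $\sum_i v_iX^{i-1}$ (equivalently, avoidance of finitely many rational hyperplanes); both are valid.
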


\begin{proof}
Fix a subset $S$ of $\Z^{n}\times H\setminus \{0_{\Z^{n}\times H}\}$ of type $\bm{\lambda}$, and set $B=\pi_{\Z^n}(\Upsilon(S))$.
Given an integer
$$M>\max\limits_{(z_1,\dots,z_{n}) \in B}\max\limits_{j\in [1,n]} n|z_j|,$$
we define the
homomorphism $\varphi: \Z^{n} \to \Z$, as follows:
$$\varphi(z_1,\dots,z_{n})=\sum\limits_{i=1}^{n} z_i M^{i-1}.$$
Then we consider the homomorphism $\varphi\times Id: \Z^{n}\times H \to \Z\times H$.
Because of the choice of $M$, the subset $\Upsilon(S)$ and the
kernel of $\varphi\times Id$ are disjoint. Namely, suppose that there exist $b \in \Z^n$ and $h\in H$ such
that $(b,h)\in \Upsilon(S)\cap \ker(\varphi\times Id)$ which clearly implies that $b\in B$ and $h=0_H$. Set $b=(b_1,b_2,\ldots,b_n)$, we can assume that $b_{s_1}, \ldots,b_{s_c}$ are all nonnegative integers while
$b_{t_1}, \ldots,b_{t_d}$ are all negative integers.
Then, we can write
$$\sum_{j=1}^c b_{s_j} M^{s_j-1} =\sum_{j=1}^d -b_{t_j} M^{t_j-1}.$$
We can look at the two sides of this equality as two expansions in base $M$ of the same nonnegative integer,
since the coefficients $b_{s_1},\ldots,b_{s_c}, -b_{t_1},\ldots,-b_{t_d}$ all belong to the set $[0,M-1]$.
The uniqueness of such expansion implies that all these coefficients are zero, i.e., that $b=0_{\Z^n}$. We recall that $(\varphi\times Id)(b,h)=0_{\Z\times H}$ also implies that $h=0_H$, and hence we obtain that $(b,h)=0_{\Z^{n}\times H}$ but this is in contradiction with the assumptions that both $S$ and $\Delta(S)$ does not contain zero and that $S$ is non-zero sum.

Therefore, due to Lemma \ref{cambiogruppo}, $(\varphi\times Id)(S)$ is also non-zero sum and, since $\varphi\times Id$ is the identity on $H$, it is of type $\bm{\lambda}$. Then, it follows from Lemma \ref{cambiogruppo} that also~$S$ is sequenceable.
\end{proof}

From the previous proposition, we deduce this result.

\begin{thm}\label{torsionfree}
Let $H$ be a finite abelian group and suppose that, for infinitely many primes $p$, any non-zero sum subset of a given type $\bm{\lambda} = (\lambda_0, \ldots \lambda_{t-1} )$ of $\Z_p\times H\setminus\{0_{\Z_p\times H}\}$ is sequenceable. Then also any non-zero sum subset of $G\times H\setminus\{0_{G\times H}\}$ of type $\bm{\lambda}$ is sequenceable provided that $G$ is torsion-free and abelian.
\end{thm}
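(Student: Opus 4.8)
The plan is to reduce the claim to the free-abelian case already settled in Propositions~\ref{prop:Z} and~\ref{prop:Zn}, by passing from $G$ to the finitely generated subgroup generated by the (finitely many) first coordinates of $S$. First I would assemble the two inputs. The hypothesis, combined with Proposition~\ref{prop:Z}, shows that every non-zero sum subset of type $\bm{\lambda}$ of $\Z\times H\setminus\{0_{\Z\times H}\}$ is sequenceable; feeding this conclusion into Proposition~\ref{prop:Zn} extends the same statement to $\Z^{n}\times H$ for every $n\geq 2$, and hence for every $n\geq 1$.

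Next, given a non-zero sum subset $S$ of $G\times H\setminus\{0_{G\times H}\}$ of type $\bm{\lambda}$, I would set $G'=\langle \pi_G(S)\rangle$, the subgroup of $G$ generated by the finite set of first coordinates of the elements of $S$. Being finitely generated and torsion-free (as a subgroup of the torsion-free $G$), $G'$ is free abelian by the structure theorem, say $G'\cong\Z^{n}$ with $n\geq 0$. I would then observe that not only $S\subseteq G'\times H$ but in fact $\Upsilon(S)\subseteq G'\times H$, since $G'$ is closed under sums and differences and therefore contains the first coordinates of every element of $S$, of $\Delta(S)$, and of $\Sigma S$.

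The remaining task is to transport the problem into a free group of positive rank. I would fix an injective homomorphism $\iota\colon G'\to\Z^{n'}$ with $n'=\max(n,1)$ (an isomorphism onto its image when $n\geq 1$, and the zero map into $\Z$ when $n=0$), and consider $\varphi=\iota\times\mathrm{Id}_H\colon G'\times H\to\Z^{n'}\times H$. Since $S$ avoids $0_{G\times H}$, consists of distinct elements, and is non-zero sum, we have $0\notin\Upsilon(S)$, so $\ker\varphi=\{0\}$ is disjoint from $\Upsilon(S)$; as $\varphi$ is the identity on $H$, the image $\varphi(S)$ is a non-zero sum subset of $\Z^{n'}\times H$ of the same type $\bm{\lambda}$. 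By the first paragraph $\varphi(S)$ is sequenceable, so Lemma~\ref{cambiogruppo} (applied with $G_1=G'\times H$) yields a sequencing of $S$ inside $G'\times H$. Finally, a sequencing of $S$ computed in the subgroup $G'\times H$ has exactly the same partial sums when read in $G\times H$, and these remain distinct because the inclusion is injective; hence $S$ is sequenceable in $G\times H$.

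The only genuinely delicate point is the necessity of restricting to the finitely generated subgroup $G'$ \emph{before} mapping into a free group: one cannot in general embed all of $G\times H$ into some $\Z^{n'}\times H$ by a single homomorphism, because a torsion-free abelian group need not admit any nontrivial homomorphism to $\Z^{n'}$ (for instance $\mathrm{Hom}(\mathbb{Q},\Z)=0$). Passing to $G'=\langle\pi_G(S)\rangle$ is precisely what makes the structure theorem applicable and renders the kernel condition of Lemma~\ref{cambiogruppo} automatic; once that reduction is in place, everything else is the routine transport of the type, the non-zero-sum property, and sequenceability along injective homomorphisms and subgroup inclusions.
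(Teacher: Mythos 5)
Your proposal is correct and follows essentially the same route as the paper: the paper also passes to the finitely generated subgroup generated by $S$, applies the structure theorem to its torsion-free projection onto $G$ to view $S$ inside $\Z^k\times H$, and then invokes Propositions~\ref{prop:Z} and~\ref{prop:Zn}. Your version merely spells out more explicitly the details the paper leaves implicit (the trivial-kernel check for Lemma~\ref{cambiogruppo}, the rank-$0$ and rank-$1$ edge cases, and the transport of the sequencing back along the subgroup inclusion).
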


\begin{proof}
Let $S$ be a non-zero sum subset of type $\bm{\lambda}$ of $G\times H\setminus\{0_{G\times H}\}$ and let $k=\lambda_0+\lambda_1+\cdots+\lambda_{t-1}$ be its size. Denote by $K$ the subgroup of $G\times H$ generated by $S$.
We can apply to $\pi_G(K)$ the structure theorem for finitely generated abelian groups, obtaining that $\pi_G(K)$
is isomorphic to a subgroup of $\Z^{k}$. So, we can view $S$ as a non-zero sum subset of type $\bm{\lambda}$ of $\Z^k\times H\setminus \{0_{\Z^k\times H}\}$.
Since, by hypothesis, we are assuming that, for infinitely many primes $p$ any non-zero sum subset of type $\bm{\lambda}$ of $\Z_p\times H\setminus \{0_{\Z_p\times H}\}$ is sequenceable, by Propositions \ref{prop:Z} and \ref{prop:Zn},
also any non-zero sum subset of type $\bm{\lambda}$ of $\Z^k\times H\setminus\{0_{\Z^k\times H}\}$ is sequenceable and hence also $S$ it is.
\end{proof}
\begin{cor}
Let $H$ be a finite abelian group, $k$ be a positive integer, and suppose that, for infinitely many primes $p$, any non-zero sum subset of size $k$ of $\Z_p\times H\setminus\{0_{\Z_p\times H}\}$ is sequenceable. Then also any non-zero sum subset of size $k$ of $G\times H\setminus \{0_{G\times H}\}$ is sequenceable provided that $G$ is torsion-free and abelian.
\end{cor}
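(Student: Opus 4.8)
The plan is to derive the corollary as an immediate specialization of Theorem~\ref{torsionfree}, exploiting that the single condition ``size $k$'' refines into the finitely many conditions ``type $\bm{\lambda}$ with $\sum_i \lambda_i = k$''. First I would fix a non-zero sum subset $S$ of $G\times H\setminus\{0_{G\times H}\}$ of size $k$, where $G$ is torsion-free and abelian. Writing $t=|H|$ and listing the elements of $H$ as $h_0=0_H,\dots,h_{t-1}$, the subset $S$ has a well-defined type $\bm{\lambda}=(\lambda_0,\dots,\lambda_{t-1})$, recording how often each $h_i$ occurs in $\pi_2(S)$, and this type satisfies $\lambda_0+\cdots+\lambda_{t-1}=k$.

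The key observation is that the size-$k$ hypothesis is logically stronger than the type-$\bm{\lambda}$ hypothesis required by Theorem~\ref{torsionfree}. Indeed, the corollary assumes there is an infinite set of primes $p$ for which every non-zero sum subset of size $k$ of $\Z_p\times H\setminus\{0_{\Z_p\times H}\}$ is sequenceable. For each such $p$, any non-zero sum subset of type $\bm{\lambda}$ is in particular a non-zero sum subset of size $k$ (since $\sum_i\lambda_i=k$), and hence is sequenceable. Thus the hypothesis of Theorem~\ref{torsionfree} holds for this very type $\bm{\lambda}$, witnessed by the same infinite set of primes. Applying that theorem yields that every non-zero sum subset of type $\bm{\lambda}$ of $G\times H\setminus\{0_{G\times H}\}$ is sequenceable; in particular $S$ is sequenceable, which is what we want.

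I do not expect a genuine obstacle here: the entire mathematical content lives in Theorem~\ref{torsionfree} (and behind it in Propositions~\ref{prop:Z} and~\ref{prop:Zn}), and this corollary is purely a repackaging from the ``fixed type'' formulation to the ``fixed size'' formulation. The only point that warrants a moment's care is that the single size-$k$ assumption must be seen to supply, simultaneously, the per-type assumption for \emph{each} of the finitely many types $\bm{\lambda}$ with $\sum_i\lambda_i=k$. But since the chosen subset $S$ realizes exactly one such type, a single application of Theorem~\ref{torsionfree} to that type suffices, so no quantifier juggling over the (finitely many) types is actually needed in the argument.
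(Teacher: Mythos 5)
Your proof is correct and is exactly the deduction the paper intends: the corollary is stated without proof precisely because, as you observe, a size-$k$ subset has a unique type $\bm{\lambda}$ with $\lambda_0+\cdots+\lambda_{t-1}=k$, the size-$k$ hypothesis implies the type-$\bm{\lambda}$ hypothesis for the same infinite set of primes, and Theorem~\ref{torsionfree} then applies. No issues.
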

Given an element $g$ of an abelian group $G$, we denote by $o(g)$ the cardinality of the cyclic subgroup $\langle g
\rangle$ generated by $g$. Furthermore, we set
$$\vartheta(G)=\min_{0_G\neq g \in G} o(g).$$
In the following, developing the notations of the previous sections, given a set $S$ of size $k$ and an ordering $\bm{x}=(x_1,\dots,x_k)$ of $S$, we denote by $y_i(\bm{x})$ the partial sum $x_1+x_2+\dots+x_i$.

As a consequence of Theorem \ref{torsionfree}, we can prove the following result.
\begin{thm}\label{thm:as}
Let $H$ be a finite abelian group and suppose that, for infinitely many primes $p$, any non-zero sum subset of $\Z_p\times H\setminus \{0_{\Z_p\times H}\}$ of a given type $\bm{\lambda} = (\lambda_0, \ldots \lambda_{t-1} )$ is sequenceable. 
Then, there exists a positive integer $N(\bm{\lambda})$ such that any non-zero sum subset of type $\bm{\lambda}$ of $G\times H\setminus\{0_{G\times H}\}$ is sequenceable provided that $G$ is abelian and $\vartheta(G)>N(\bm{\lambda})$.
\end{thm}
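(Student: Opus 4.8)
The plan is to imitate the proof of Theorem~\ref{torsionfree}, replacing the structure theorem for torsion-free groups by the structure theorem for finitely generated abelian groups and using the hypothesis $\vartheta(G)>N(\bm{\lambda})$ to keep the resulting torsion under control. First I would set $K=\langle S\rangle\leq G\times H$ and apply the structure theorem to the finitely generated group $\pi_G(K)$, writing $\pi_G(K)\cong\Z^r\times T$ with $T$ finite. Since $\vartheta(G)>N(\bm{\lambda})$, every nonzero torsion element of $G$, and in particular every element of $T$, has order all of whose prime factors exceed $N(\bm{\lambda})$. Viewing $S$ inside $\Z^r\times T\times H$, the aim is to produce a type-preserving homomorphism $\Phi$ onto some $\Z_p\times H$, with $p$ a prime drawn from the infinite set furnished by the hypothesis, such that $\ker\Phi\cap\Upsilon(S)=\emptyset$; Lemma~\ref{cambiogruppo} together with the standing hypothesis then finishes the argument exactly as in Propositions~\ref{prop:Z} and~\ref{prop:Zn}.

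The free directions are routine. Choosing a base $M$ larger than $r$ times the maximum absolute value occurring among the $\Z^r$-coordinates of $\pi_{\Z^r}(\Upsilon(S))$, as in the proof of Proposition~\ref{prop:Zn}, collapses $\Z^r$ to $\Z$ via $(z_1,\dots,z_r)\mapsto\sum_i z_iM^{i-1}$ without sending any element of $\Upsilon(S)$ with a nonzero free part into the kernel; composing with reduction modulo a sufficiently large admissible prime $p$ keeps these images nonzero. Consequently the only members of $\Upsilon(S)$ that can land in $\ker\Phi$ are the purely torsion ones lying in $T\times\{0_H\}$, that is, the elements of $V=\Upsilon(S)\cap(G\times\{0_H\})$ of finite order.

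The main obstacle is exactly these torsion elements of $V$. An additive homomorphism $G\to\Z_p$ annihilates every element whose order is coprime to $p$, so a single $\Z_p$ can keep all of $V$ off the kernel only when $p$ divides the order of each element of $V$; when two members of $V$ have coprime orders — which $\vartheta(G)>N(\bm{\lambda})$ alone does not forbid, as in $\Z_{\ell_1}\times\Z_{\ell_2}$ with distinct large $\ell_1,\ell_2$ — no such $p$ exists, and the reduction to $\Z_p\times H$ cannot be performed verbatim. This is where the bulk of the work lies, and I expect it to be the genuinely delicate step. I would resolve it by a secondary induction on the torsion: if $S$ lies in a proper subgroup $G_0\times H$ then, since $\vartheta(G_0)\geq\vartheta(G)$, one recurses on the smaller group; in the remaining case one uses a large prime factor $\ell$ of $|T|$ as the ground field in the Non-Vanishing Corollary itself, absorbing the complementary part of $T$ into the finite factor playing the role of $H$, rather than importing sequenceability through Lemma~\ref{cambiogruppo}. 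The relevant coefficient is again a nonzero integer whose prime divisors are bounded in terms of $k$, hence nonzero modulo $\ell$ as soon as $\ell>N(\bm{\lambda})$.

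Finally, tracking the thresholds, I would take $N(\bm{\lambda})$ to be the largest prime factor occurring among the finitely many coefficients of the associated polynomials $p_{\bm{a}}$ that are used, together with any constant needed to separate the free coordinates, so that every prime exceeding $N(\bm{\lambda})$ is simultaneously admissible for the hypothesis and large enough to keep all the relevant coefficients nonzero. The mere existence of such an $N(\bm{\lambda})$ is all that the statement demands, so no optimisation of this bound is required, and this is precisely what makes the conclusion an asymptotic one in $\vartheta(G)$.
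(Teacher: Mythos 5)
Your reduction runs into a real obstruction that your sketch does not overcome, so there is a genuine gap. You correctly identify the problem: after writing $\pi_G(\langle S\rangle)\cong\Z^r\times T$, the torsion part $T$ may be, say, $\Z_{\ell_1}\times\Z_{\ell_2}$ with $\ell_1\neq\ell_2$ both huge, and then no homomorphism into any $\Z_p\times H$ can keep all of $\Upsilon(S)$ out of the kernel, so Lemma~\ref{cambiogruppo} is simply unavailable. But your proposed fix does not repair this. The hypothesis of Theorem~\ref{thm:as} is purely abstract --- sequenceability of type-$\bm{\lambda}$ subsets of $\Z_p\times H$ for infinitely many $p$ --- and carries no information about the Non-Vanishing Corollary or about coefficients of any polynomial $p_{\bm{a}}$; using ``a large prime factor $\ell$ of $|T|$ as the ground field'' while ``absorbing the complementary part of $T$ into the finite factor'' would require sequenceability statements for $\Z_\ell\times(T'\times H)$ and for types over that larger finite group, which is neither assumed nor derivable from the hypothesis (and the claim that ``the relevant coefficient is a nonzero integer whose prime divisors are bounded in terms of $k$'' has no basis here). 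For the same reason, defining $N(\bm{\lambda})$ via prime factors of polynomial coefficients presupposes a particular proof of the hypothesis. Finally, the ``secondary induction on the torsion'' makes no progress: $\langle S\rangle$ is already the minimal ambient subgroup, and passing to it does not change $T$.

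The paper avoids all of this by arguing non-constructively. It assumes no $N(\bm{\lambda})$ exists, extracts counterexamples $S_M\subseteq G_M\times H$ with $\vartheta(G_M)\to\infty$, passes to a subsequence on which both the map $\Sym(k)\to[1,k]\times[1,k]$ recording a repeated pair of partial sums and the projections to $H$ stabilize, and then forms $G'=\bigl(\prod_\ell G_{M_{\sigma(\ell)}}\bigr)/\approx$, the product modulo finitely-supported elements. The condition $\vartheta(G_{M_{\sigma(\ell)}})\to\infty$ is exactly what makes $G'$ torsion-free, and the diagonal set built from the $S_{M_{\sigma(\ell)}}$ is a non-zero sum, type-$\bm{\lambda}$, non-sequenceable subset of $G'\times H$, contradicting Theorem~\ref{torsionfree}. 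This compactness argument sidesteps the torsion obstruction entirely, at the cost of yielding no explicit value of $N(\bm{\lambda})$; the explicit bound $k!/2$ is obtained separately, and only for $G=\Z_m$, in Theorem~\ref{thm:explicitUB} by a different (Cramer's rule) argument. If you want a direct proof along your lines, you would at minimum need a new idea for handling torsion parts with several coprime large orders.
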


\begin{proof}
Let us suppose, for sake of contradiction, that such $N(\bm{\lambda})$ does not exist.
It means that, for any positive integer $M$, there exists an abelian group $G_M$ such that $\vartheta(G_M)>M$ and
there exists a non-zero sum subset of type $\bm{\lambda}$,
$S_M=\{x_{M,1},x_{M,2},\dots,x_{M,k}\}$ of $G_M\times H\setminus\{0_{G_M\times H}\}$ that is not sequenceable. Here $k=\lambda_0+\lambda_1+\dots+\lambda_{t-1}$.
Therefore, for any permutation $\omega$ of
$[1,k]$, defined the ordering $$\bm{x}_{M,\omega}=(x_{M,\omega(1)},x_{M,\omega(2)},\dots,x_{M,\omega(k)})$$ of $S_M$ there exists a pair $(s,t)$, with $s,t \in [1,k]$ and $s\not=t$, such that $y_s(\bm{x}_{M,\omega})=y_t(\bm{x}_{M,\omega})$.
Choosing for each $\omega\in \Sym(k)$ one of these pairs, for any positive integer $M$ we can define the function
$f_M: \Sym(k) \to [1,k]\times [1,k]$ that maps
$\omega$ into this chosen pair.
Since there are only finitely many maps from $\Sym(k)$ to $[1,k]\times [1,k]$, there exists an infinite sequence
$(M_1,\ldots, M_{\ell},\ldots)$ such that $f_{M_1}=f_{M_{\ell}}$ for all $\ell\geq 1$.
Moreover, since $H$ is a finite group, there exists an infinite subsequence $(M_{\sigma(1)},\ldots, M_{\sigma(\ell)},\ldots)$ of $(M_1,\ldots, M_\ell,\ldots)$ such that the projection of $S_{M_{\sigma(\ell)}}=\{x_{M_{\sigma(\ell)},1},x_{M_{\sigma(\ell)},2},\ldots,x_{M_{\sigma(\ell)},k}\}$ on $H$ does not depend on $\ell$, and thus, for any $j\in [1,k]$ and $\ell\in \mathbb{N}$, we have $\pi_H(x_{M_{\sigma(1)},j})=\pi_H(x_{M_{\sigma(\ell)},j})$.

Let us consider the group
$G=\bigtimes\limits_{i=1}^{\infty} G_{M_{\sigma(i)}}$ and the following equivalence relation on $G$.
Given $a=(a_i),b=(b_i)\in G$, we set $a\approx b$ whenever $a_i\neq b_i$ only on a finite number of indices $i$.
Since the equivalence class $[0]$ consists of the elements $(a_i)$ of $G$ that are nonzero on a
finite number of coordinates $a_i$, and so it is a subgroup of $G$,
the quotient set $G'=G/\approx$ is still an abelian group.

Now we want to prove that $G'$ is torsion-free. Let us suppose for sake of contradiction that there exists
an element $[a]\neq0_{G'}=[0]$ in $G'$ of finite order, say $n$. Let $\pi_{\ell}: G\to G_{M_{\sigma(\ell)}}$ be the canonical projection on $G_{M_{\sigma(\ell)}}$.
For any $\ell$ such that $M_{\sigma(\ell)}>n$, either $\pi_{\ell}(a)=0_{G_{M_{\sigma(\ell)}}}$ or
we have $n\cdot \pi_{\ell}(a)\not=0_{G_{M_{\sigma(\ell)}}}$. However,
since $n\cdot [a]=[0]$ in $G'$ and due to the definition of $\approx$,
we should have $n\cdot \pi_{\ell}(a)=0_{G_{M_{\sigma(\ell)}}}$ for $\ell$ large enough. It follows that $\pi_{\ell}(a)$ is eventually zero but
this is a contradiction since $[a]$ is nonzero. Therefore $G'$ is torsion-free.

Now we consider the following subset $S$ of $G'\times H$
$$S=\{([z_1], \pi_H(x_{M_{\sigma(1)},1})),([z_2],\pi_H(x_{M_{\sigma(1)},2})),\dots,([z_k],\pi_H(x_{M_{\sigma(1)},k}))\}$$
where $z_j\in G$ is such that $\pi_{\ell}(z_j)=\pi_{G_{M_{\sigma(\ell)}}}(x_{M_{\sigma(\ell)},j})$.
Note that, since the projection of $S_{M_{\sigma(\ell)}}$ on $H$ does not depend on $\ell$, $S$ is a non-zero sum subset of $G'\times H\setminus\{0_{G'\times H}\}$. Indeed, $0_{G'\times H}\in S$ would imply that $\pi_H(x_{M_{\sigma(1)},j})=0_H$ and $[z_j]=0_{G'}=[0]$ for some index $j$ and hence, for $\ell$ large enough, $\pi_{\ell}(z_j)=\pi_{G_{M_{\sigma(\ell)}}}(x_{M_{\sigma(\ell)},j})=0_{G_{M_{\sigma(\ell)}}}$. But then, since $\pi_H(x_{M_{\sigma(\ell)},j})=\pi_H(x_{M_{\sigma(1)},j})=0_H$, we would have that $x_{M_{\sigma(\ell)},j}=0_{G_{M_{\sigma(\ell)}}\times H}$ which is in contradiction with the assumption that $S_{M_{\sigma(\ell)}}$ does not contain the zero. In a similar way we also exclude the possibility that $\sum S= 0_{G'\times H}$. Moreover, since $\pi_H(S)=\pi_H(S_{M_{\sigma(1)}})$ and $S_{M_{\sigma(1)}}$ is of type $\bm{\lambda}$, also $S$ is of type $\bm{\lambda}$.

Now we consider a permutation $\omega$ of $\Sym(k)$ and we define the ordering $$\bm{x}_{\omega}=(([z_{\omega(1)}],\pi_H(x_{M_{\sigma(1)},\omega(1)})),([z_{\omega(2)}],\pi_H(x_{M_{\sigma(1)},\omega(2)})),\ldots,([z_{\omega(k)}],\pi_H(x_{M_{\sigma(1)},\omega(k)})))$$ of $S$ and the ordering
$\bm{x}_{\ell,\omega}=(x_{M_{\sigma(\ell)},\omega(1)},x_{M_{\sigma(\ell)},\omega(2)},\ldots,$ $x_{M_{\sigma(\ell)}, \omega(k)})$ of $S_{M_{\sigma(\ell)}}$ that
corresponds to $\omega$.
As $f_{M_{\sigma(1)}}=f_{M_{\sigma(\ell)}}$ for all $\ell\geq 1$, there exists a pair
$(s,t)$ such that $y_s(\bm{x}_{\ell,\omega})=y_t(\bm{x}_{\ell,\omega})$ for all $\ell$.
Since $(\pi_{G_{M_{\sigma(1)}}}(y_s(\bm{x}_{1,\omega})),\pi_{G_{M_{\sigma(2)}}}(y_s(\bm{x}_{2,\omega})),\ldots,$
$ \pi_{G_{M_{\sigma(\ell)}}}(y_s(\bm{x}_{\ell,\omega})),\dots)$ belongs to the equivalence class $\pi_{G'}(y_s(\bm{x}_{\omega}))$ and since $\pi_{H}(y_s(\bm{x}_{1,\omega}))$ $= \pi_{H}(y_t(\bm{x}_{1,\omega}))$, it easily
follows that $y_s(\bm{x}_{\omega})=y_t(\bm{x}_{\omega})$ also for the set $S$. It means that $S$ is a non-zero sum subset of type $\bm{\lambda}$ that is not sequenceable. Since this is in contradiction with
Theorem~\ref{torsionfree}, we have proved the statement.
\end{proof}
\begin{cor}
Let $H$ be a finite abelian group, $k$ be a positive integer, and suppose that, for infinitely many primes $p$, any non-zero sum subset of size $k$ of $\Z_p\times H\setminus\{0_{\Z_p\times H}\}$ is sequenceable. Then, there exists a positive integer $N(\bm{\lambda})$ such that any non-zero sum subset of $G\times H\setminus\{0_{G\times H}\}$ is sequenceable provided that $G$ is abelian and $\vartheta(G)>N(\bm{\lambda})$.
\end{cor}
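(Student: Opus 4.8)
The plan is to reduce the size-$k$ statement to finitely many fixed-type statements, apply Theorem~\ref{thm:as} to each, and then take a maximum. The only substantive point is that, because $H$ is finite, a subset of a given size can have only finitely many types, and this finiteness is exactly what converts the type-wise thresholds supplied by Theorem~\ref{thm:as} into a single uniform threshold.

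First I would set $t = |H|$ and observe that every subset $S$ of size $k$ of $G \times H$ has a type $\bm{\lambda} = (\lambda_0, \ldots, \lambda_{t-1})$ with each $\lambda_i \geq 0$ and $\sum_{i=0}^{t-1} \lambda_i = k$. The collection $\Lambda_k$ of such types is finite; a stars-and-bars count gives $|\Lambda_k| = \binom{k+t-1}{t-1}$. Next, for each $\bm{\lambda} \in \Lambda_k$ I would verify that the hypothesis of the corollary specializes to the hypothesis of Theorem~\ref{thm:as} for that type: any non-zero sum subset of type $\bm{\lambda}$ of $\Z_p \times H \setminus \{0_{\Z_p \times H}\}$ is in particular a non-zero sum subset of size $k$, so the assumption that for infinitely many primes $p$ every size-$k$ non-zero sum subset is sequenceable yields, for those same primes, that every type-$\bm{\lambda}$ non-zero sum subset is sequenceable. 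Theorem~\ref{thm:as} then furnishes a positive integer $N(\bm{\lambda})$ such that any non-zero sum subset of type $\bm{\lambda}$ of $G \times H \setminus \{0_{G \times H}\}$ is sequenceable whenever $G$ is abelian and $\vartheta(G) > N(\bm{\lambda})$.

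Finally I would set $N = \max_{\bm{\lambda} \in \Lambda_k} N(\bm{\lambda})$, which is well defined precisely because $\Lambda_k$ is finite. Given an abelian group $G$ with $\vartheta(G) > N$ and a non-zero sum subset $S$ of size $k$ of $G \times H \setminus \{0_{G \times H}\}$, the set $S$ has some type $\bm{\lambda} \in \Lambda_k$, and since $\vartheta(G) > N \geq N(\bm{\lambda})$, Theorem~\ref{thm:as} shows that $S$ is sequenceable. This $N$, a function of $k$ and $H$ alone, is the integer claimed in the statement (where the notation $N(\bm{\lambda})$ is to be read as this uniform bound, since the hypothesis now ranges over all subsets of size $k$ rather than a single type).

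I do not expect any genuine obstacle: the entire content of the argument is the finiteness of $\Lambda_k$, which lets one pass from the per-type bounds of Theorem~\ref{thm:as} to a single bound. The one place to be careful is the specialization step, namely checking that the size-$k$ hypothesis really does imply the type-$\bm{\lambda}$ hypothesis for each $\bm{\lambda}$ with $\sum_i \lambda_i = k$ and over the \emph{same} infinite set of primes; this is immediate from the inclusion of type-$\bm{\lambda}$ subsets among size-$k$ subsets, but it is the hinge of the reduction.
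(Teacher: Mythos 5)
Your proposal is correct and matches the argument the paper intends (the corollary is stated without proof, as an immediate consequence of Theorem~\ref{thm:as}): since $H$ is finite there are only finitely many types of size-$k$ subsets, one applies the theorem to each type and takes the maximum of the resulting thresholds. You also rightly flag that the paper's notation $N(\bm{\lambda})$ in the corollary should really be a uniform bound depending only on $k$ and $H$, which your $N = \max_{\bm{\lambda}\in\Lambda_k} N(\bm{\lambda})$ supplies.
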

\begin{rem}
One can easily adapt the arguments of Theorem \ref{thm:as} to set $S$ whose sum is zero obtaining the following statement:

Let $H$ be a finite abelian group and suppose that, for infinitely many primes $p$, any subset of $\Z_p\times H$ of a given type $\bm{\lambda} = (\lambda_0, \ldots \lambda_{t-1} )$ is sequenceable. 
Then, there exists a positive integer $N(\bm{\lambda})$ such that any non-zero sum subset of type $\bm{\lambda}$ of $G\times H\setminus\{0_{G\times H}\}$ is sequenceable provided that $G$ is abelian and $\vartheta(G)>N(\bm{\lambda})$.
\end{rem}
Therefore, as a consequence of Theorem \ref{th:main}, we are able to prove our first asymptotic result.
\begin{thm}
Let $G$ be an abelian group such that $\vartheta(G)>N(k)$. Then subsets~$S$ of size~$k$ of~$G\times\Z_t\setminus\{0_{G\times\Z_t}\}$ are sequenceable in the following cases:
\begin{enumerate}
\item  $k \leq 11$ and $t \leq 5$,
\item  $k=12$ and $t \leq 4$,
\item  $k= 13$ and $t \in \{2,3\}$, provided~$S$ contains at least one element not in the subgroup $G\times \{0\}$,
\item  $k=14$ and $t = 2$,   provided~$S$ contains at least one element not in the subgroup $G\times \{0\}$,
\item $k=15$ and $t=2$, provided~$S$ does not contain exactly~$0$,~$1$,~$2$ or~$15$ elements of the subgroup $G\times \{0\}$.
\end{enumerate}
\end{thm}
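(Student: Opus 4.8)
The plan is to combine the prime-level results of Theorem~\ref{th:main} with the asymptotic transfer machinery of Section~\ref{sec:infinity}, taking the finite factor to be $H = \Z_t$. The substantive content already lives in Theorem~\ref{th:main} (the prime case) and in Theorem~\ref{thm:as} together with the remark following it (the passage from $\Z_p \times H$ to $G \times H$); so the whole proof is essentially a bookkeeping reduction that feeds the former into the latter, type by type.

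First I would fix $k$ and $t$ as in one of the five cases and enumerate the finitely many types $\bm{\lambda} = (\lambda_0, \ldots, \lambda_{t-1})$ with $\sum_i \lambda_i = k$. For every prime $p$ coprime to $t$ we have $\Z_p \times \Z_t \cong \Z_{pt}$, and in this cyclic group the unique subgroup of order $p$ is $\Z_p \times \{0\}$; hence the number of elements of a subset lying in it equals exactly $\lambda_0$. The same count applies verbatim to the subgroup $G \times \{0\}$ of $G \times \Z_t$. Consequently the ``provided'' clauses translate precisely into restrictions on $\lambda_0$ (namely $\lambda_0 < k$ in cases~3 and~4, and $\lambda_0 \notin \{0,1,2,15\}$ in case~5), and these are exactly the hypotheses under which Theorem~\ref{th:main} guarantees sequenceability. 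Calling a type \emph{admissible} when it meets the relevant restriction, Theorem~\ref{th:main} then asserts that for infinitely many primes $p$ every subset of an admissible type $\bm{\lambda}$ of $\Z_p \times \Z_t \setminus \{(0,0)\}$ is sequenceable, regardless of whether its sum is zero.

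Next I would apply the transfer results with $H = \Z_t$ to each admissible type. For a non-zero-sum subset, Theorem~\ref{thm:as} yields a bound $N(\bm{\lambda})$ so that every non-zero-sum subset of type $\bm{\lambda}$ of $G \times \Z_t \setminus \{0\}$ is sequenceable whenever $G$ is abelian with $\vartheta(G) > N(\bm{\lambda})$; for a zero-sum subset the same conclusion comes from the remark after Theorem~\ref{thm:as}, whose hypothesis that \emph{every} subset of type $\bm{\lambda}$ of $\Z_p \times \Z_t$ is sequenceable is precisely what Theorem~\ref{th:main} provides. Setting $N(k) = \max_{\bm{\lambda}} N(\bm{\lambda})$ over the finitely many admissible types makes the bound uniform: if $\vartheta(G) > N(k)$ then $\vartheta(G) > N(\bm{\lambda})$ for each admissible $\bm{\lambda}$, so every size-$k$ subset of $G \times \Z_t \setminus \{0\}$ of admissible type is sequenceable, which is the assertion of the theorem.

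The points that need care, rather than a genuine obstacle, are threefold: verifying that the admissibility conditions on $\lambda_0$ really do mirror the subgroup conditions in the statement; routing the zero-sum subsets through the remark rather than Theorem~\ref{thm:as} itself, so that both linear and rotational sequencings are covered; and observing that only finitely many types arise, so that the maximum defining $N(k)$ is well defined. The restriction to primes $p$ coprime to $t$ costs nothing, since discarding the finitely many primes dividing $t$ (in particular the troublesome $p = t = 5$) still leaves infinitely many usable $p$, which is all that Theorem~\ref{thm:as} and its remark demand.
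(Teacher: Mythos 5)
Your proposal is correct and follows essentially the same route as the paper, which derives this theorem directly by feeding the prime-order results of Theorem~\ref{th:main} (with $\Z_p\times\Z_t\cong\Z_{pt}$ for the infinitely many primes $p$ coprime to $t$) into the type-by-type transfer of Theorem~\ref{thm:as} and the remark following it, and taking $N(k)$ to be the maximum of the finitely many $N(\bm{\lambda})$. Your explicit attention to translating the subgroup conditions into conditions on $\lambda_0$ and to routing zero-sum subsets through the remark supplies exactly the bookkeeping the paper leaves implicit.
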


\subsection{An Explicit Upper Bound}

Now, in case $G=\mathbb{Z}_m$, we provide an explicit upper bound on the numbers $N(\bm{\lambda})$ and $N(k)$.
First we need to recall some elementary facts about the solutions of a linear system over a field and over a commutative ring.
\begin{thm}[Corollary of Cramer's Theorem]\label{Cramer}
Let
\begin{equation}\label{sys1}\begin{cases}
m_{1,1}x_1+\dots+ m_{1,k}x_k=b_1;\\
\dots\\
m_{l,1}x_1+\dots+ m_{l,k}x_k=b_l
\end{cases} \end{equation}
be a linear system over a commutative ring $R$ whose associated matrix $M=(m_{i,j})$ is an $l\times k$ matrix over $R$. Let $(x_1,\dots, x_k)$ be a solution of the system and let $M'$ be a $k'\times k'$ square, nonsingular (i.e.~$\det(M')$ is invertible in $R$), submatrix of $M$. We assume, without loss of generality, that $M'$ is obtained by considering the first $k'$ rows and the first $k'$ columns of $M$. Then $x_1,\dots, x_k$ satisfy the following relation

$$\begin{bmatrix}
x_{1} \\
x_{2} \\
\vdots \\
x_{k'}
\end{bmatrix} = \begin{bmatrix}
m_{1,1} & m_{1,2} & m_{1,3} & \dots & m_{1,k'} \\
m_{2,1} & m_{2,2} & m_{2,3} & \dots & m_{2,k'} \\
\vdots & \vdots & \vdots & & \vdots \\
m_{k',1} & m_{k',2} & m_{k',3} & \dots & m_{k',k'}
\end{bmatrix} ^{-1} \begin{bmatrix}
b_1- (m_{1,k'+1}x_{k'+1}+\dots + m_{1,k}x_{k})\\
b_2- (m_{2,k'+1}x_{k'+1}+\dots+ + m_{2,k}x_{k})\\
\vdots \\
b_{k'}- (m_{k',k'+1}x_{k'+1}+\dots+ m_{k',k}x_{k})
\end{bmatrix} .$$

Equivalently, set $$ M'_j:=\begin{bmatrix}
m_{1,1} & m_{1,2} &\dots & m_{1,j-1} & b_1- (m_{1,k'+1}x_{k'+1}+\dots + m_{1,k}x_{k}) & \dots & m_{1,k'} \\
m_{2,1} & m_{2,2} &\dots & m_{2,j-1} & b_2- (m_{2,k'+1}x_{k'+1}+\dots+ + m_{2,k}x_{k}) & \dots & m_{2,k'} \\
\vdots & \vdots & \vdots & \vdots & \vdots & & \vdots \\
m_{k',1} & m_{k',2} &\dots & m_{k',j-1}& b_{k'}- (m_{k',k'+1}x_{k'+1}+\dots+ m_{k',k}x_{k})
& \dots & m_{k',k'}
\end{bmatrix}$$
we have that, for any $j\in [1,k']$
\begin{equation}\label{Cramer2}x_j= \frac{\det(M'_j)}{\det(M')}.\end{equation}
Let now assume that $R$ is a field and that the system \eqref{sys1} admits at least one solution. Then, given a maximal square nonsingular submatrix $M'$ of $M$, and fixed $x_{k'+1},\dots, x_k$ in $R$, we have that $x_1, x_2, \dots, x_k$ are a solution of the system \eqref{sys1} if and only if $x_1,\dots, x_{k'}$ satisfy the relation \eqref{Cramer2}.
\end{thm}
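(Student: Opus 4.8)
The plan is to split the statement into two assertions and prove them in turn: first the matrix identity together with the Cramer formula~\eqref{Cramer2} for an arbitrary given solution over the commutative ring $R$, and then the sharper ``if and only if'' characterization in the field case. The first part is essentially classical Cramer's rule packaged in block form; the genuine content lies in the field case, where I must upgrade from ``the chosen tuple satisfies the first $k'$ equations'' to ``it satisfies all $l$ equations.''

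For the first part I would start from a solution $(x_1,\dots,x_k)$ and isolate the first $k'$ equations of~\eqref{sys1}. Transposing the terms involving $x_{k'+1},\dots,x_k$ to the right turns these $k'$ equations into the square system $M'(x_1,\dots,x_{k'})^{T}=\tilde b$, where $\tilde b_i=b_i-\sum_{j=k'+1}^{k}m_{i,j}x_j$ is exactly the $i$-th entry of the right-hand column in the first displayed matrix equation. Since $\det(M')$ is invertible in $R$, the adjugate identity $\mathrm{adj}(M')\,M'=\det(M')\,I$ (valid over any commutative ring) shows $M'$ is invertible over $R$ with inverse $\det(M')^{-1}\mathrm{adj}(M')$; left-multiplying by it yields the displayed formula. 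The scalar form~\eqref{Cramer2} is then Cramer's rule: the identity $\det(M')\,x_j=\det(M'_j)$ follows by expanding $\det(M'_j)$ in its $j$-th column $\tilde b=M'(x_1,\dots,x_{k'})^{T}$ and using multilinearity and the alternating property of the determinant, all of which hold over a commutative ring, after which dividing by the unit $\det(M')$ gives~\eqref{Cramer2}.

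In the field case the ``only if'' direction is immediate from the first part, since any solution satisfies the first $k'$ equations and hence~\eqref{Cramer2}. For the ``if'' direction I would exploit the maximality of $M'$: over a field, a maximal square nonsingular submatrix has size $k'=\mathrm{rank}(M)$, and since $M'$ is nonsingular the first $k'$ full rows of $M$ are linearly independent (a dependence among them would, on restricting to the first $k'$ columns, force a dependence among the rows of $M'$). Being $k'$ independent rows in a row space of dimension $k'$, they span it, so each remaining row satisfies $M_i=\sum_{s=1}^{k'}\alpha_{i,s}M_s$ for some $\alpha_{i,s}\in R$ and $i>k'$. Invoking the hypothesis that~\eqref{sys1} possesses at least one solution $\mathbf{x}^{*}$ and applying each dependency to $\mathbf{x}^{*}$ produces the compatibility relations $b_i=\sum_{s=1}^{k'}\alpha_{i,s}b_s$ for $i>k'$.

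With these two ingredients the conclusion is a one-line verification: any tuple $\mathbf{x}$ defined by~\eqref{Cramer2} satisfies the first $k'$ equations by the first part, and then for each $i>k'$ one computes $M_i\cdot\mathbf{x}=\sum_{s=1}^{k'}\alpha_{i,s}(M_s\cdot\mathbf{x})=\sum_{s=1}^{k'}\alpha_{i,s}b_s=b_i$, so $\mathbf{x}$ solves the whole system. I expect the main obstacle to be this last part, specifically the bookkeeping needed to justify that maximality of $M'$ forces the row-space conclusion over a field and to combine it correctly with the existence of a solution to obtain the $b_i$-compatibility; once those are in place the remaining equality is routine.
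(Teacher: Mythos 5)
The paper states this result as a recalled classical fact (``we need to recall some elementary facts about the solutions of a linear system'') and supplies no proof of its own, so there is no argument in the paper to compare yours against. Your proposal is a correct and complete proof. The first part is, as you say, just Cramer's rule over a commutative ring: the adjugate identity makes $M'$ invertible once $\det(M')$ is a unit, and the column-expansion/multilinearity argument for $\det(M')\,x_j=\det(M'_j)$ is valid without any field hypothesis. The only substantive content is the ``if'' direction in the field case, and you identify and handle the two needed ingredients correctly: maximality of $M'$ forces $k'=\mathrm{rank}(M)$, so the first $k'$ rows of $M$ are a basis of the row space, and the hypothesis that \eqref{sys1} is consistent converts each row dependency $M_i=\sum_s\alpha_{i,s}M_s$ into the compatibility relation $b_i=\sum_s\alpha_{i,s}b_s$. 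One step you leave slightly implicit is that \eqref{Cramer2} conversely implies the first $k'$ equations: this is because, with $x_{k'+1},\dots,x_k$ fixed, the square system $M'(x_1,\dots,x_{k'})^{T}=\tilde b$ has the \emph{unique} solution given by the Cramer formula (invertibility of $M'$), so satisfying \eqref{Cramer2} is equivalent to satisfying those equations, not merely implied by it. With that remark made explicit, the argument is airtight.
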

\begin{rem}\label{linarcombi}
In case the system \eqref{sys1} of Theorem \ref{Cramer} is homogeneous (i.e. $b_1=b_2=\dots=b_l=0$), due to the Laplace expansion of the determinant, $\det(M'_j)$ can be written as a linear combination of $x_{k'+1}, \dots, x_{k}$. More precisely, denoting by $M'_{i,j}$ the matrix obtained by deleting the $i$-th row and the $j$-th column of $M'$, we have that
\begin{equation}\label{linearcombieq}\det(M'_j)=\sum_{i=1}^{k'} (-1)^{j+i+1} \\det(M'_{i,j})(m_{i,k'+1}x_{k'+1}+\dots + m_{i,k}x_{k}).\end{equation}
\end{rem}
Now we are ready to provide, in case of groups of type $\mathbb{Z}_m\times H$, a quantitative version of Theorem \ref{thm:as}.
\begin{thm}\label{thm:explicitUB}
Let $H$ be a finite abelian group and suppose that, for infinitely many primes $p$, any non-zero sum subset of a given type $\bm{\lambda} = (\lambda_0, \ldots \lambda_{t-1} )$ of $\Z_p\times H\setminus \{0_{\Z_p\times H}\}$ is sequenceable. Then also any non-zero sum subset of type $\bm{\lambda}$ of $\mathbb{Z}_m\times H\setminus\{0_{\mathbb{Z}_m\times H}\}$ is also sequenceable provided that the prime factors of $m$ are all greater than $k!/2$ where $k=\lambda_0+\lambda_1+\dots+\lambda_{t-1}$.
\end{thm}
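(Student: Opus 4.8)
The plan is to argue by contradiction, converting a hypothetical non-sequenceable subset of $\mathbb{Z}_m\times H$ into a non-sequenceable subset of $\mathbb{Z}\times H$, which is impossible by Theorem~\ref{torsionfree} since $\mathbb{Z}$ is torsion-free. Write $S=\{(v_1,h_1),\dots,(v_k,h_k)\}$ with $v_j\in\mathbb{Z}_m$ and $h_j\in H$, assume every prime factor of $m$ exceeds $k!/2$, and suppose for contradiction that $S$ is not sequenceable. The case $k=1$ is trivial, so assume $k\geq 2$.

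First I would record, for every permutation $\omega\in\Sym(k)$, an obstruction to $\bm{x}_\omega$ being a sequencing: since $S$ is non-zero sum there exist $s\neq t$ with $(s,t)\neq(0,k)$ and $y_s(\bm{x}_\omega)=y_t(\bm{x}_\omega)$, which identifies a proper nonempty ``interval'' $B_\omega\subseteq[1,k]$ (the positions lying between the two equal partial sums) with $\sum_{j\in B_\omega}(v_j,h_j)=0_{\mathbb{Z}_m\times H}$. Reading the two coordinates separately, the $0/1$ indicator vector $\beta_\omega\in\{0,1\}^k$ of $B_\omega$ satisfies $\beta_\omega\cdot v\equiv 0\pmod m$, while $\sum_{j\in B_\omega}h_j=0_H$ holds exactly in $H$. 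Collecting these rows into an integer matrix $A$ with $Av\equiv 0\pmod m$, my goal becomes to lift $v$ to an integer vector $\hat v$ with $A\hat v=0$ over $\mathbb{Z}$ and $\hat v\equiv v\pmod m$: then $\hat S=\{(\hat v_j,h_j)\}\subseteq\mathbb{Z}\times H$ is the desired counterexample, because $A\hat v=0$ forces each $B_\omega$ to sum to $0_{\mathbb{Z}\times H}$, so every ordering of $\hat S$ has a genuine (non-rotational, since $\hat S$ will be non-zero sum) collision and $\hat S$ is not sequenceable.

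The congruence $\hat v\equiv v\pmod m$ is exactly what makes $\hat S$ an admissible subset: it guarantees $0\notin\hat S$, that the $(\hat v_j,h_j)$ are distinct, and that $\Sigma\hat S\neq 0$ (each checked using that the corresponding statement for $S$ forces a nonzero residue mod $m$ in the coordinate where $H$ degenerates), while the unchanged $H$-coordinates preserve the type $\bm{\lambda}$. To produce $\hat v$ I would apply the Cramer machinery of Theorem~\ref{Cramer} and Remark~\ref{linarcombi}: let $r=\operatorname{rank}_{\mathbb{Q}}A$, choose a nonsingular $r\times r$ submatrix $M'$, set the $k-r$ free coordinates of $\hat v$ to integer lifts of the corresponding $v_j$, and solve the pivot coordinates from the homogeneous relations. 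The pivot solutions are integers congruent to the $v_j$ modulo $m$ precisely when $\det(M')$ is invertible modulo $m$; equivalently, one may phrase the whole step through the Smith normal form of $A$, whose largest invariant factor divides $\det(M')$, so that $\ker_{\mathbb{Z}}A$ reduces onto $\ker_{\mathbb{Z}/m}A$ and the known solution $v\bmod m$ lifts.

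The crux is therefore the arithmetic size of $\det(M')$. The rows of $M'$ are $0/1$ vectors, so by the Leibniz formula $\det(M')$ equals the number of even permutations contributing a product $1$ minus the number of odd ones; each count is at most $r!/2\leq k!/2$, whence $0<|\det(M')|\leq k!/2$, and this clean factor of $\tfrac12$ is the source of the bound in the statement. Since every prime factor of $m$ is strictly larger than $k!/2\geq|\det(M')|$ and $\det(M')\neq 0$, no prime factor of $m$ divides $\det(M')$, so $\gcd(\det(M'),m)=1$ and the lift $\hat v$ exists, contradicting Theorem~\ref{torsionfree}. I expect the main obstacle to be the bookkeeping in the last two steps, namely verifying that the Cramer/Smith lift really returns integers in the prescribed residue classes and that all admissibility conditions on $\hat S$ follow from $\hat v\equiv v\pmod m$; the determinant estimate itself, which supplies the value $k!/2$, is the short and decisive part.
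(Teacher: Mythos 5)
Your proposal follows the same overall strategy as the paper's proof: record, for each $\omega\in\Sym(k)$, a zero-sum ``interval'' giving a homogeneous linear system with a $(k!)\times k$ coefficient matrix over $\{0,1\}$; bound the determinant of any square submatrix by $k!/2$ via the Leibniz expansion (this is exactly where the constant in the statement comes from, in both arguments); use the hypothesis on the prime factors of $m$ to transfer the known solution modulo $m$ to a solution over a torsion-free group of the same type $\bm{\lambda}$; and contradict Theorem~\ref{torsionfree}. The one genuine divergence is the target of the lift: the paper solves the system over $\mathbb{Q}$ by Cramer's rule, obtaining rational pivot coordinates $z_j^1=\det(M'_j)/\det(M')$, and then needs a three-case analysis to verify that the resulting subset of $\mathbb{Q}\times H$ is admissible (distinct, nonzero, non-zero sum), whereas you insist on an integer lift $\hat v\equiv v\pmod m$ with $A\hat v=0$ over $\mathbb{Z}$, which makes all those admissibility checks one-line consequences of the congruence. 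That integer lift does exist, but only via your second phrasing: your sentence claiming the Cramer pivot solutions ``are integers congruent to the $v_j$ modulo $m$ precisely when $\det(M')$ is invertible modulo $m$'' is not correct as stated --- Cramer over $\mathbb{Q}$ produces rationals with denominator $\det(M')$, and invertibility of $\det(M')$ modulo $m$ does not make them integers (this is precisely why the paper works in $\mathbb{Q}\times H$). The Smith normal form route you offer as an alternative is the one that actually delivers the integer lift: since the largest invariant factor divides $\det(M')$ and hence is coprime to $m$, the first $r$ coordinates of $Vv$ vanish modulo $m$ and can be replaced by $0$ without changing the residue class, and unimodularity of $V$ returns an integer $\hat v\equiv v\pmod m$ in $\ker_{\mathbb{Z}}A$. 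With that step made precise, your argument is complete and arguably cleaner than the paper's on the admissibility verification, at the cost of invoking Smith normal form rather than only Cramer's rule.
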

\begin{proof}
Let us assume, by contradiction, that there exists a non-zero sum subset $S=\{x_1,\dots,x_k\}$ of type $\bm{\lambda}$ in $\mathbb{Z}_m\times H\setminus\{0_{\mathbb{Z}_m\times H}\}$ that is not sequenceable and all the prime factors of $m$ are bigger than $k!/2$. Here $S$ would be a non-zero sum set of type $\bm{\lambda}$ whose elements do not admit an ordering with different partial sums.

This implies that, for any possible permutation $\omega\in \Sym(k)$, considered the ordering $$\bm{x}_{\omega}=(x_{\omega(1)},x_{\omega(2)},\dots,x_{\omega(k)}),$$
there exist $s$ and $t$ such that $y_{s}(\bm{x}_\omega)=y_{t}(\bm{x}_\omega)$, that is, assuming $s<t$, \begin{equation}\label{system}x_{\omega(s+1)}+x_{\omega(s+2)}+\dots+x_{\omega(t)}=0.\end{equation}
Therefore we would have a solution $(\pi_{\mathbb{Z}_m}(x_1),\dots,\pi_{\mathbb{Z}_m}(x_k))^T$ to the system of equations \eqref{system} in $\mathbb{Z}_m$. In the following, for simplicity, we will denote $\pi_{\mathbb{Z}_m}(x_j)$ by $x_j^1$.
Denoted by $M$ the $(k!)\times k$ matrix of the coefficients of this system, we note that the coefficients of $M$ belong to $\{0,1\}$. Hence, when it is clear from the context, we will not specify the ring $R$ over which we consider $M$ and its submatrices. Otherwise, we will use the notation $(M)_R$.

Since $\mathbb{Z}_m$ is a commutative ring, once we have chosen a maximal square nonsingular (over $\mathbb{Z}_m$) $k'\times k'$ submatrix $M'$ of $M$, because of Theorem \ref{Cramer}, any solution of the system \eqref{system} satisfies the relations \eqref{Cramer2}. We can also assume, without loss of generality, that $M'$ is the $k'\times k'$ matrix obtained by considering the first $k'$ rows and the first $k'$ columns of $M$. In view of Remark \ref{linarcombi}, for $j\leq k'$, we have that
$$x_j^1=\frac{\sum_{i=1}^{k'} (-1)^{j+i+1} \det(M'_{i,j})(m_{i,k'+1}x_{k'+1}^1+\dots + m_{i,k}x_{k}^1)}{\det(M')}$$
where we can perform the division since $M'$ is nonsingular in $\mathbb{Z}_m$ and, as defined in Remark \ref{linarcombi}, $M'_{i,j}$ is the matrix obtained from $M'$ by deleting the $i$-th row and the $j$-th column.

Here we note that since all the coefficients of $M$ belong to $\{0,1\}$, the determinant of any square submatrix of $M$ (that is big at most $k\times k$) is a sum of at most $k!/2$ ones and at most $k!/2$ minus ones. Therefore, if all the prime factors of $m$ are bigger than $k!/2$, a square submatrix $M'$ of $M$ is nonsingular in $\mathbb{Z}_{m}$ if and only if it is nonsingular in $\mathbb{Q}$.
It follows that, under this assumption, $M'$ is a maximal nonsingular square submatrix of $M$ also in $\mathbb{Q}$.
Now we consider the system \eqref{system} over $\mathbb{Q}$. Given an element $x\in \mathbb{Z}_m$, we denote by $\widetilde{x}$ the smallest non-negative integer that is equivalent to $x$ modulo $m$. Then we set
$$z_{j}^1=\begin{cases}
\frac{\sum_{i=1}^{k'} (-1)^{j+i+1} \det((M'_{i,j})_{\mathbb{Q}})(m_{i,k'+1}\widetilde{x^1_{k'+1}}+\dots + m_{i,k}\widetilde{x^1_{k}})}{\det((M')_{\mathbb{Q}})}\mbox{ if } j\leq k';\\
\widetilde{x_j^1} \mbox{ otherwise.}
\end{cases}
$$
We note that the system \eqref{system} is homogeneous and admits at least the trivial solution. Thus, because of Theorem \ref{Cramer}, $(z^1_1,\dots,z^1_k)^T$ is also a solution of \eqref{system} over $\mathbb{Q}$.
Now we define $z_j\in \mathbb{Q}\times H$ so that, $\pi_{\mathbb{Q}}(z_j)=z^1_j$ and $\pi_H(z_j)=\pi_H(x_j)$. Hence, because of the definition, $(z_1,\dots,z_k)^T$ is a solution to the linear system \eqref{system} in $\mathbb{Q}\times H$. Therefore, for any permutation $\omega$ of
$\widetilde{S}=\{z_1,\dots,z_k\}$, considered the ordering $$\bm{z}_{\omega}=(z_{\omega(1)},z_{\omega(2)},\dots,z_{\omega(k)}),$$ there exist $s,t \in [1,k]$ such that $y_s(\bm{z}_\omega)=y_t(\bm{z}_\omega)$.

Now we prove that the elements of $\widetilde{S}$ are different, non-zero and their sum is non-zero in $\mathbb{Q}\times H$.
To prove that $z_1,\dots,z_k$ are different, let us assume, by contradiction, that there exist $h$ and $j$ (with $h<j$) such that $z_h=z_j$ in $\mathbb{Q}\times H$.
Note that $\{x_1,\dots,x_k\}$ is a set of size $k$ in $\mathbb{Z}_{m}\times H$ and hence, either $\pi_H(x_h)\not=\pi_H(x_j)$ or $x^1_h\not=x^1_j$ in $\mathbb{Z}_{m}$.
In the first case, since $\pi_H(z_h)=\pi_H(x_h)$ and $\pi_H(z_j)=\pi_H(x_j)$ we would have that also $z_h\not=z_j$ which contradicts our hypothesis.
In the second one, we consider the projections $z^1_h$ and $z^1_j$ of $z_h$ and $z_j$ over the first component of $\mathbb{Q}\times H$.
Here, to obtain a contradiction, we consider three cases according to whether $h$ and $j$ belong to $[1,k']$ or not.

Case 1. Both $h$ and $j$ are smaller than or equal to $k'$. Because of the definition, we have that $$z^1_h=\frac{\sum_{i=1}^{k'} (-1)^{h+i+1} \det((M'_{i,h})_{\mathbb{Q}})(m_{i,k'+1}\widetilde{x^1_{k'+1}}+\dots + m_{i,k}\widetilde{x^1_{k}})}{\det((M')_{\mathbb{Q}})}=$$
$$=z^1_j=\frac{\sum_{i=1}^{k'} (-1)^{j+i+1} \det((M'_{i,j})_{\mathbb{Q}})(m_{i,k'+1}\widetilde{x^1_{k'+1}}+\dots + m_{i,k}\widetilde{x^1_{k}})}{\det((M')_{\mathbb{Q}})}.$$ This implies that
$$\sum_{i=1}^{k'} (-1)^{h+i+1} \det((M'_{i,h})_{\mathbb{Q}})(m_{i,k'+1}\widetilde{x^1_{k'+1}}+\dots + m_{i,k}\widetilde{x^1_{k}})=$$
$$=\sum_{i=1}^{k'} (-1)^{j+i+1} \det((M'_{i,j})_{\mathbb{Q}})(m_{i,k'+1}\widetilde{x^1_{k'+1}}+\dots + m_{i,k}\widetilde{x^1_{k}})$$ and hence that $$\sum_{i=1}^{k'} (-1)^{h+i+1} \det((M'_{i,h})_{\mathbb{Z}_m})(m_{i,k'+1}x^1_{k'+1}+\dots + m_{i,k}x^1_{k})=$$
$$=\sum_{i=1}^{k'} (-1)^{j+i+1} \det((M'_{i,j})_{\mathbb{Z}_m})(m_{i,k'+1}x^1_{k'+1}+\dots + m_{i,k}x^1_{k}).$$ But, due to property \eqref{Cramer2} and since $\det((M')_{\mathbb{Z}_{m}})$ is invertible in $\mathbb{Z}_{m}$, it would follows that also $x^1_h=x^1_j$ that is a contradiction.

Case 2. Here we assume that $h$ is smaller than or equal to $k'$ while $j$ is bigger than $k'$. Because of the definition, we have that $$z^1_h=\frac{
\sum_{i=1}^{k'} (-1)^{h+i+1} \det((M'_{i,h})_{\mathbb{Q}})(m_{i,k'+1}\widetilde{x^1_{k'+1}}+\dots + m_{i,k}\widetilde{x^1_{k}})}{\det((M')_{\mathbb{Q}})}=z^1_j=\widetilde{x^1_j}.$$ This implies that
$$\sum_{i=1}^{k'} (-1)^{h+i+1} \det((M'_{i,h})_{\mathbb{Z}_m})(m_{i,k'+1}x^1_{k'+1}+\dots + m_{i,k}x^1_{k})=\det((M')_{\mathbb{Z}_m})x^1_j.$$ But, due to property \eqref{Cramer2} and since $\det((M')_{\mathbb{Z}_m})$ is invertible in $\mathbb{Z}_m$, it would follows that also $$x^1_h=\frac{\sum_{i=1}^{k'} (-1)^{h+i+1} \det((M'_{i,h})_{\mathbb{Z}_m})(m_{i,k'+1}x^1_{k'+1}+\dots + m_{i,k}x^1_{k})}{\det((M')_{\mathbb{Z}_m})}=x^1_j$$
that is a contradiction.

Case 3. Both $h$ and $j$ are bigger than $k'$. This implies that $z^1_h=\widetilde{x^1_h}=\widetilde{x^1_j}=z^1_j$ and hence we would obtain that $x^1_h=x^1_j$ that is a contradiction.

Since we have obtained a contradiction in all cases, it follows that $z_1,\dots, z_k$ are all different.

Now we prove that none of the $z_j$ is $0_{\mathbb{Q}\times H}$. Indeed, $z_j=0_{\mathbb{Q}\times H}$ would imply that $\pi_H(z_j)=0_H$ and
$$\sum_{i=1}^{k'} (-1)^{j+i+1} \det((M'_{i,j})_{\mathbb{Q}})(m_{i,k'+1}\widetilde{x^1_{k'+1}}+\dots + m_{i,k}\widetilde{x^1_{k}})=0_{\mathbb{Q}}$$
or
$$\widetilde{x^1_j}=0_{\mathbb{Q}}$$
according to whether $j$ belongs to $[1,k']$ or not. Since $\det((M')_{\mathbb{Z}_{m}})$ is invertible in $\mathbb{Z}_m$, in both cases, it would follows that $x_j=0_{\mathbb{Z}_m\times H}$ that is a contradiction since $S$ does not contain the zero.
In a very similar way, one can check that $\sum \widetilde{S}\not= 0_{\mathbb{Q}\times H}$.

Finally, we note that, since the projections of $\{z_1,\dots z_k\}=\widetilde{S}$ over $H$ are equal to the projections of $\{x_1,\dots x_k\}=S$ over $H$ and $S$ is a set of type $\bm{\lambda}$, also $\widetilde{S}$ is of type $\bm{\lambda}$.

It follows that $\widetilde{S}$ is a non-zero sum subset of type $\bm{\lambda}$ in $\mathbb{Q}\times H\setminus\{0_{\mathbb{Q}\times H}\}$ that is not sequenceable. However, because of {\rm Theorem \ref{torsionfree}}, any non-zero sum subset $\widetilde{S}$ of type $\bm{\lambda}$ of  $ G\times H\setminus\{0_{G\times H}\}$ is sequenceable whenever $G$ is torsion-free and abelian. Since $\mathbb{Q}$ is a torsion-free abelian group, we obtain a contradiction. Therefore any non-zero sum subset of type $\bm{\lambda}$ of the group $\mathbb{Z}_m\times H$, that does not contain $0_{\mathbb{Z}_m\times H}$, is squenceable assuming that the prime factors of $m$ are all greater than $k!/2$.
\end{proof}
\begin{cor}
Let $H$ be a finite abelian group, $k$ be a positive integer, and suppose that, for infinitely many primes $p$, any non-zero sum subset of size $k$ of $\Z_p\times H\setminus\{0_{\Z_p\times H}\}$ is sequenceable. 
Then also any non-zero sum subset of size $k$ of $\mathbb{Z}_m\times H\setminus\{0_{\mathbb{Z}_m\times H}\}$ is sequenceable provided that the prime factors of $m$ are all greater than $k!/2$ where $k=\lambda_0+\lambda_1+\dots+\lambda_{t-1}$.
\end{cor}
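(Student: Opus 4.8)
The plan is to deduce the size-$k$ statement directly from the type-specific statement already established in Theorem~\ref{thm:explicitUB}, the crucial observation being that the threshold $k!/2$ appearing there depends only on $k$ and not on the particular type~$\bm{\lambda}$. In effect, the corollary is just the ``union over types'' of the theorem, made possible by the uniformity of the bound.

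First I would record that a non-zero sum subset of size~$k$ of $\Z_m \times H \setminus \{0\}$ has a well-defined type $\bm{\lambda} = (\lambda_0, \ldots, \lambda_{t-1})$, where $t = |H|$, satisfying $\lambda_0 + \cdots + \lambda_{t-1} = k$, and that there are only finitely many such types. Next I would check that the hypothesis of the corollary passes to each individual type. Since every non-zero sum subset of type~$\bm{\lambda}$ (with $\sum_i \lambda_i = k$) is in particular a non-zero sum subset of size~$k$, the assumption that for infinitely many primes $p$ every size-$k$ subset of $\Z_p \times H \setminus \{0\}$ is sequenceable yields, for that same infinite set of primes, that every subset of type~$\bm{\lambda}$ of $\Z_p \times H \setminus \{0\}$ is sequenceable. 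Thus the hypothesis of Theorem~\ref{thm:explicitUB} is satisfied for each type $\bm{\lambda}$ with $\sum_i \lambda_i = k$. This step is where the only genuine care is needed: one must note that a single infinite set of primes serves all size-$k$ subsets simultaneously, so it restricts to a valid witnessing set of primes for each fixed type.

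Applying Theorem~\ref{thm:explicitUB} to each such type then gives that every non-zero sum subset of type~$\bm{\lambda}$ of $\Z_m \times H \setminus \{0\}$ is sequenceable whenever all prime factors of~$m$ exceed $k!/2$. Finally I would assemble these conclusions: since any non-zero sum subset of size~$k$ of $\Z_m \times H \setminus \{0\}$ has some type $\bm{\lambda}$ with $\sum_i \lambda_i = k$, and since the bound $k!/2$ is identical for every such type, the desired sequenceability holds uniformly for all size-$k$ subsets under the single hypothesis that the prime factors of~$m$ are all greater than $k!/2$. There is no substantive obstacle here: all the real content lives in Theorem~\ref{thm:explicitUB}, and the proof is essentially a bookkeeping argument whose one delicate point is precisely the $k$-only dependence of the threshold, which is exactly what permits the finitely many per-type results to collapse into a single size-$k$ statement.
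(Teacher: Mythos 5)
Your proposal is correct and matches the paper's (implicit) argument: the corollary is stated there without proof precisely because it follows from Theorem~\ref{thm:explicitUB} by the bookkeeping you describe --- restricting the single infinite witnessing set of primes to each of the finitely many types $\bm{\lambda}$ with $\sum_i\lambda_i=k$ and using that the threshold $k!/2$ depends only on $k$. No gaps.
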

\begin{rem}
One can easily adapt the arguments of Theorem \ref{thm:explicitUB} to set $S$ whose sum is zero obtaining the following statement:

Let $H$ be a finite abelian group and suppose that, for infinitely many primes $p$, any subset of a given type $\bm{\lambda} = (\lambda_0, \ldots \lambda_{t-1} )$ of $\Z_p\times H\setminus\{0_{\Z_p\times H}\}$ is sequenceable. Then also any subset of type $\bm{\lambda}$ of $\mathbb{Z}_m\times H\setminus\{0_{\mathbb{Z}_m\times H}\}$ is sequenceable provided that the prime factors of $m$ are all greater than $k!/2$ where $k=\lambda_0+\lambda_1+\dots+\lambda_{t-1}$.\end{rem}

Therefore, as a consequence of Theorem \ref{th:main}, we are able to prove Theorem \ref{th:infty}.
\\

\noindent\textbf{Theorem \ref{th:infty}.}
\emph{
Let $n = mt$ where all the prime factors of ~$m$ are bigger than $k!/2$. Then,  the subsets~$S$ of size~$k$ of~$\Z_n \setminus \{ 0 \}$ are sequenceable in the following cases:
\begin{enumerate}
\item  $k \leq 11$ and $t \leq 5$,
\item  $k=12$ and $t \leq 4$. 
\item  $k= 13$ and $t \in \{2,3\}$, provided~$S$ contains at least one element not in the subgroup of order~$m$,
\item  $k=14$ and $t = 2$,   provided~$S$ contains at least one element not in the subgroup of order~$m$,
\item $k=15$ and $t=2$, provided~$S$ does not contain exactly~$0$,~$1$,~$2$ or~$15$ elements of the subgroup of order~$m$.
\end{enumerate}
}

\end{document}